\documentclass[11pt,letterpaper]{amsart}
\usepackage{geometry}
\geometry{body={6.7in,9.1in},centering}
\usepackage{mathrsfs,amsmath,amssymb,amsthm,amsfonts}
\usepackage{latexsym,amscd,setspace,color}
\usepackage{mathpazo} 
\usepackage[active]{srcltx}
\usepackage[mathscr]{eucal}
\newtheorem{theorem}{Theorem}
\newtheorem{proposition}{Proposition}
\newtheorem{lemma}{Lemma}
\newtheorem{corollary}{Corollary}
\theoremstyle{remark}
\newtheorem{remark}{Remark}

\newcommand{\C}{\mathbb{C}}
\newcommand{\R}{\mathbb{R}}
\newcommand{\N}{\mathbb{N}}
\newcommand{\Z}{\mathbb{Z}}

\newcommand{\D}{\Omega}

\newcommand{\dbar}{\overline{\partial}}
\newcommand{\rem}{\mathscr R}
\DeclareMathOperator{\re}{Re}
\DeclareMathOperator{\im}{Im}
\DeclareMathOperator{\Arg}{Arg}
\newcommand{\F}{\mathcal{F}}
\usepackage{hyperref}
\onehalfspace

\title[High Dimensional Worm Domains]{Irregularity of the Bergman projection on 
\\ Worm Domains in $\C^{n}$}

\author{David Barrett}
\address[David Barrett]{University of Michigan, Department of Mathematics, Ann
Arbor, MI 48109, USA}
\email{barrett@umich.edu}

\author{S\"{o}nmez \c{S}ahuto\u{g}lu}
\address[S\"{o}nmez \c{S}ahuto\u{g}lu]{University of Toledo, Department of
Mathematics, Toledo, OH 43606, USA}
\email{sonmez.sahutoglu@utoledo.edu}

\thanks{The first author is supported in part by NSF grant number DMS-0901205.
The second author is  supported in part by NSF grant number DMS-0602191.}
\subjclass[2010]{Primary 32W05, 32T20; Secondary 32A25}
\keywords{pseudoconvex, worm domains,  irregularity, Bergman projection}

\date{\today}
\begin{document}

\begin{abstract}
We construct higher-dimensional versions of the Diederich-Forn\ae{}ss worm
domains and show that the Bergman projection operators for these domains are not
bounded on high-order $L^p$-Sobolev spaces for $1\leq p<\infty.$
\end{abstract}

\maketitle

\section{Introduction} \label{Intro}

Let $\D$ be a bounded domain in $\C^n$ and $A^{2}(\D)$ denote the Bergman space
of square-integrable holomorphic functions on $\D.$ The Bergman projection on 
$\D$ is the orthogonal projection from $L^2(\D)$ onto $A^{2}(\D).$

The Bergman projection is known to be regular, in the sense that it maps $W^s$ to
$W^s$ for all $s\geq 0$ where $W^s$ denotes the Sobolev space of order $s,$ on a
large class of smooth bounded pseudoconvex domains (throughout this paper a domain is
smooth if its boundary is a smooth manifold). Regularity is, usually, 
established through the $\dbar$-Neumann problem, the solution operator for the
complex Laplacian $\Box=\dbar\dbar^*+\dbar^*\dbar$ on square integrable
$(0,1)$-forms. For more information on this matter we refer the  reader to
\cite{BoasStraube99,StraubeBook} and the references therein.

Irregularity of the Bergman projection is not understood nearly as well as 
regularity. The story of irregularity goes back to the discovery of the
worm  domains in $\C^2$ by Diederich and Forn\ae{}ss \cite{DiederichFornaess77}. 
Worm domains were constructed to show that  the closure of some smooth
bounded pseudoconvex domains may not have Stein neighborhood bases (a compact
set $K\subset \C^n$ is said to have a Stein neighborhood basis if for every open
set $U$ containing  $K$ there exists a pseudoconvex domain $V$ such that 
$K\subset V \subset U$). Indeed, Diederich and Forn\ae{}ss in
\cite{DiederichFornaess77} showed that the closure a worm domain does not
have a Stein neighborhood basis if the total winding is bigger than or equal to
$\pi.$ It turned out that worm domains are also counter-examples for regularity
of the Bergman projection. In 1991,  Kiselman \cite{Kiselman91} showed that the
Bergman projection does not satisfy Bell's condition R on nonsmooth  worm
domains (a domain $\D$ satisfies Bell's condition 
R if  the Bergman projection maps $C^\infty(\overline\Omega)$ to
$C^\infty(\overline\Omega)$). In 1992,  the first author 
\cite{Barrett92} showed that the Bergman projection on a smooth worm domain   does not map
$W^s$ into $W^s$ if $s\geq \pi /(\text{total winding}).$  On the other hand, Boas
and Straube \cite{BoasStraube92} showed that the Bergman projection maps $W^k$
into $W^k$ if $k\leq \pi /(2\times \text{total winding})$ and $k$ is a positive
integer or $k=1/2.$ Finally, in 1996 Christ \cite{Christ96} showed that the
Bergman projections on smooth worm domains, with any positive
winding, do not satisfy Bell's condition R. Recently, Krantz and Peloso
\cite{KrantzPeloso08a,KrantzPeloso08b} studied the asymptotics
for the Bergman kernel on the model domains in $\C^2$ and derived $L^p$
(ir)regularity for the Bergman projection on worm domains in $\C^2.$ 

In this note we will construct smooth bounded pseudoconvex domains 
$\D_{\alpha\beta}\subset \C^n$ that are  higher dimensional generalizations of
the worm domains in $\C^2$ and  study the irregularity of the Bergman projection
on  these domains on $L^p$ Sobolev spaces for $1\leq p<\infty.$ We will use 
 the method developed by the first author in \cite{Barrett92} to show that
irregularity on $L^2$ Sobolev spaces depends only on the total winding whereas the
irregularity on $L^p$ spaces with $p\neq 2$ depend on the total winding as well as the
dimension $n$.

The two parameters $\alpha$ and $\beta$ in $\D_{\alpha\beta}$ represent the speed of
the winding and the thickness of the annulus, respectively. Both parameters play a role in
the proof of Theorem \ref{Theorem}, but we find it interesting to note that the actual
results depend only on the total winding whether this is achieved by fast winding along a
thin annulus or slow winding along a thick annulus. 

The domains  $\D_{\alpha\beta} \subset \C^n, n\geq 3,$ are defined by     
\begin{equation*}\label{Domain}
\D_{\alpha\beta}=\left\{(z_{1}, z',z_{n})\in \C^{n}: r(z_1,z',z_n) <0\right\}
\end{equation*}
with
\[r(z_1,z',z_n) =\left|z_{1}-e^{2i\alpha \ln|z_{n}|}
\right|^{2}+|z'|^{2}-1 +\sigma(|z_n|^2-\beta^2)+ \sigma(1-|z_n|^2);\]
here  $z'=(z_{2},\ldots,z_{n-1}), |z'|^{2}=|z_{2}|^{2}+\cdots+|z_{n-1}|^{2}$, 
the constants $\alpha>0, \beta>1,$  and
$\sigma(t)=Me^{-1/t}$ for $t> 0,\,\sigma(t)=0$ for $t\leq 0$ for some $M>0$.

In section \ref{levi} below we show that $\D_{\alpha\beta}$ 
is smooth bounded pseudoconvex when $M$ is sufficiently large.
The main result of this paper is the following theorem.

\begin{theorem}\label{Theorem}  
The Bergman projection for $\D_{\alpha\beta}$ does not map
$W^{p,s}\left(\D_{\alpha\beta}\right)$ into
$W^{p,s}\left(\D_{\alpha\beta}\right)$ where $1\leq p<\infty$ and  
$s\ge \frac{\pi}{2\alpha\ln\beta}+n\left(\frac{1}{p}-\frac{1}{2}\right).$
\end{theorem}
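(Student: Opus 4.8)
The plan is to adapt the symmetry method of \cite{Barrett92} to the rotationally invariant geometry of $\D_{\alpha\beta}$. The decisive feature is that $r$ depends on $z_n$ only through $|z_n|$, so the circle action $R_\theta:(z_1,z',z_n)\mapsto(z_1,z',e^{i\theta}z_n)$ is a group of automorphisms of $\D_{\alpha\beta}$. Since the Bergman projection $P$ is determined intrinsically by the $L^2$-geometry, it commutes with every $R_\theta$ and hence preserves the isotypic decomposition $L^2(\D_{\alpha\beta})=\bigoplus_{k\in\Z}L^2_k$, where $f\in L^2_k$ means $f\circ R_\theta=e^{ik\theta}f$. First I would record that a holomorphic $f\in L^2_k$ has the form $f=z_n^{k}g(z_1,z')$ with $g$ holomorphic, and that $P$ restricts to a bounded operator $P_k$ on each component. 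The point of the argument is to locate a component (or, better, a continuous family interpolating the integer components through a Mellin transform in $|z_n|$) on which $P_k$ violates the claimed Sobolev bound; note that the relevant total winding, as $\ln|z_n|$ runs over $(0,\ln\beta)$, equals $2\alpha\ln\beta$, so the target threshold $\tfrac{\pi}{2\alpha\ln\beta}$ is exactly $\pi/(\text{total winding})$, matching the $\C^2$ result of \cite{Barrett92}.

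Next I would partially straighten the winding by the multivalued holomorphic substitution $\zeta_1=z_n^{-2i\alpha}z_1=e^{-2i\alpha\log z_n}z_1$, which moves the center $e^{2i\alpha\ln|z_n|}$ of the ball-shaped slices toward a fixed position and converts $R_\theta$ into the translation $\log z_n\mapsto\log z_n+i\theta$ together with the scaling $\zeta_1\mapsto e^{2\alpha\theta}\zeta_1$. The essential phenomenon of the worm survives as a nontrivial monodromy: transporting once around in $\arg z_n$ multiplies $\zeta_1$ by $e^{4\pi\alpha}$, and it is precisely this monodromy — rather than any genuine product structure, which does \emph{not} arise since the holomorphic unwinding and the real unwinding differ by a factor $e^{2\alpha\arg z_n}$ — that obstructs regularity. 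In these coordinates the resonant holomorphic building blocks are the complex powers $z_n^{\mu}$ paired with the $(z_1,z')$-dependence dictated by the explicit Bergman kernel of the ball factor; the admissible exponents form a lattice shifted by the winding, and $L^2$-membership of $z_n^{\mu}$ over the strip $0<\ln|z_n|<\ln\beta$ pins the critical imaginary part of $\mu$ at scale $\pi/\ln\beta$.

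With the model understood I would construct the obstruction in the manner of \cite{Barrett92}: exhibit a function $G\in A^2(\D_{\alpha\beta})$ carrying the resonant boundary behavior of type $z_n^{\mu_0}$ with $\mu_0$ at the edge permitted by the geometry, and then produce a function $f$, smooth up to the boundary and hence in every $W^{p,s}$, whose projection $Pf$ equals $G$ modulo $W^{p,s}$-harmless terms (so that $PG=G$ forces the singular part to persist). One then computes directly that the resonant singularity lies in $W^{p,s}(\D_{\alpha\beta})$ precisely when $s<\tfrac{\pi}{2\alpha\ln\beta}+n\bigl(\tfrac1p-\tfrac12\bigr)$: the factor $z_n^{\mu_0}=e^{\mu_0\log z_n}$ contributes the winding threshold $\tfrac{\pi}{2\alpha\ln\beta}$ through the size of $\im\mu_0$, while measuring the explicit singular function in $L^p$ rather than $L^2$ over the $2n$-real-dimensional domain shifts the threshold by the Sobolev--Lebesgue correction $n\bigl(\tfrac1p-\tfrac12\bigr)$. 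Failure of $Pf$ to lie in $W^{p,s}$ at and above the stated $s$ then yields the theorem.

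The main obstacle, I expect, is twofold. First, pinning the \emph{sharp} exponent requires controlling (the relevant part of) the Bergman kernel on $\D_{\alpha\beta}$ itself, not merely on the idealized strip-times-ball model: one must verify that the soft caps coming from the cut-offs $\sigma(|z_n|^2-\beta^2)$ and $\sigma(1-|z_n|^2)$ perturb the resonant analysis only in lower-order, $W^{p,s}$-harmless ways, so that the singular exponent $\mu_0$ is unchanged. Second, the $L^p$ estimate for $p\neq 2$ is genuinely harder than the Hilbert-space case of \cite{Barrett92}: rather than reading off an $\ell^2$ spectral condition, one must estimate $L^p$-Sobolev norms of the explicit singular mode, and it is exactly here that the dimension $n$ enters through the ball factor in $(z_1,z')$, producing the term $n\bigl(\tfrac1p-\tfrac12\bigr)$ that disappears when $p=2$ and recovers Barrett's $L^2$ threshold.
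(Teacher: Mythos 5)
Your plan shares two real ingredients with the paper's proof---the symmetry reduction under the rotation group and the identification of the critical exponent through a Mellin/Fourier analysis in $\ln|z_n|$---but it omits the structural device that the whole argument rests on, and the omission is exactly the obstacle you flag but do not resolve. You propose to exhibit a resonant $G\in A^2(\D_{\alpha\beta})$ and to verify that the cut-off terms $\sigma(|z_n|^2-\beta^2)$ and $\sigma(1-|z_n|^2)$ perturb the resonance ``only in lower-order, $W^{p,s}$-harmless ways''; that amounts to establishing sharp Bergman kernel asymptotics on the smooth worm itself near the weakly pseudoconvex annulus, and no method for doing so is offered. This is not a routine perturbation statement: obtaining such asymptotics on smooth worms is a genuinely hard problem (it is what Christ's and Krantz--Peloso's much heavier machinery addresses in $\C^2$, and even there not in the form you need). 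The paper never analyzes the kernel of $\D_{\alpha\beta}$ at all. Instead it runs a scaling argument: the anisotropic dilations $\tau_\lambda(z_1,z',z_n)=(2\lambda^2z_1,\lambda z',z_n)$ blow the worm up to the explicit fibered model domain $D$ of \eqref{LimitDomain}; the assumed $W^{p,s}$-boundedness on $\D_{\alpha\beta}$ is converted, via Ligocka's weighted inequality \eqref{HoloSob} for holomorphic functions, into the weighted $L^p$ estimate \eqref{Estimate} on the dilated domains with constant independent of $\lambda$; Lemma \ref{SupposeContrary} passes that estimate to the limit domain $D$; and Lemma \ref{Bad-f} contradicts it there, using the residue computation of $K_{0,-2}$ (Proposition \ref{Kernel}), which is available precisely because $D$, unlike $\D_{\alpha\beta}$, reduces exactly to weighted Bergman kernels on a strip. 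All the resonance analysis happens on the model; the dilation-and-limit mechanism is the bridge back to the worm. Without it (or a substitute), your proof has no route from the model computation to $\D_{\alpha\beta}$, so the plan as written cannot close.

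Two further points. First, your claim that a holomorphic $f\in L^2_k$ is globally of the form $z_n^k g(z_1,z')$ with $g$ holomorphic is not correct on a worm: because of the winding, the rotation-invariant factor is only \emph{locally} independent of $z_n$, and the right statement is that it lives on a Riemann domain spread over the $z_1$-plane (in the paper, $-\pi/2<\Arg z_1<2\alpha\ln\beta+\pi/2$); your later discussion of monodromy shows you are aware of this, but the decomposition has to be set up on that Riemann domain from the outset. Second, the singular object whose $W^{p,s}$-membership decides the threshold is a complex power of $z_1$ (the paper's $z_1^{\nu_{\alpha\beta}-n/2-i\mu_\alpha}$), not a factor $z_n^{\mu_0}$---the latter is not even single-valued for non-integer $\mu_0$---and the dimensional correction $n\left(\frac1p-\frac12\right)$ comes from integrating $|z_1|^{p(\nu_{\alpha\beta}-n/2-m+t)}$ against the $r_1^{n-1}\,dr_1$ volume growth of the paraboloidal $(z_1,z')$-slices of $D$, as in Lemma \ref{Bad-f}; your attribution of this shift to ``the ball factor'' is right in spirit but is only pinned down by that computation. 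Incidentally, your transference step (``$Pf=G$ modulo harmless terms'') can be made exact rather than approximate: on the model domain one takes $f$ to be a radial bump centered at $w$, and then $P_{0,-2}P_\infty f=K_{0,-2}(\cdot,w)$ on the nose by the mean-value property.
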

Here $W^{p,s}\left(\D_{\alpha\beta}\right)$ is the Sobolev space of order
$s$ with exponent $p$ and when 
$W^{p,s}\left(\D_{\alpha\beta}\right)\not\subset L^2\left(\D_{\alpha\beta}\right)$ 
we mean that the $W^{p,s}$ bounds do not hold for the Bergman projection on 
$W^{p,s}\left(\D_{\alpha\beta}\right)\cap L^2\left(\D_{\alpha\beta}\right).$ The
denominator $2\alpha\ln\beta$ appearing above may be interpreted as the 
total amount of winding along the annulus $1<|z_{n}|<\beta$ 
(see \eqref{LimitDomain} below).  

If we choose $p=2$ then  the amount of irregularity provided by a fixed amount
of winding is independent of the dimension. 

\begin{corollary}\label{corollary1}  
The Bergman projection for $\D_{\alpha\beta}$ does not map
$W^{2,s}\left(\D_{\alpha\beta}\right)$ to
$W^{2,s}\left(\D_{\alpha\beta}\right)$ when $s\geq 
\frac{\pi}{2\alpha\ln\beta}$.
\end{corollary}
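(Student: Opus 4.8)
Since Corollary \ref{corollary1} is exactly the case $p=2$ of Theorem \ref{Theorem}---for which the dimensional correction $n(1/p-1/2)$ vanishes---it suffices to describe how I would prove the theorem and then specialize. The plan is to adapt the method of \cite{Barrett92} to the higher-dimensional model. The first step is to replace the smooth bounded domain $\D_{\alpha\beta}$ by the limit domain
\[\D^\infty_{\alpha\beta}=\left\{(z_1,z',z_n): \left|z_1-e^{2i\alpha\ln|z_n|}\right|^2+|z'|^2<1,\ 1<|z_n|<\beta\right\},\]
on which the cutoff terms $\sigma$ vanish identically, so that $r$ agrees with the defining function of $\D_{\alpha\beta}$ on the worm part of the boundary. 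I would argue that $W^{p,s}$-(ir)regularity of the Bergman projection $P$ is governed entirely by the behavior near this piece; away from it the domain is well behaved---this is the purpose of the cutoffs $\sigma$ and of choosing $M$ large in Section \ref{levi}---so no obstruction can arise there.

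The second step exploits symmetry. The model $\D^\infty_{\alpha\beta}$ is invariant under the rotations $R_\psi\colon(z_1,z',z_n)\mapsto(z_1,z',e^{i\psi}z_n)$ and under the unitary action $z'\mapsto Uz'$, and $P$ commutes with both, hence preserves each isotypic component. Decomposing via the Fourier series in $\arg z_n$ and the Mellin transform in $|z_n|$ reduces $P$ to a one-parameter family of essentially one-dimensional operators. The winding enters through the non-holomorphic ``unwinding'' diffeomorphism $(\zeta,z',z_n)\mapsto(e^{2i\alpha\ln|z_n|}\zeta,z',z_n)$, which identifies $\D^\infty_{\alpha\beta}$ with the product of the ball $\{|\zeta-1|^2+|z'|^2<1\}$ and the annulus $1<|z_n|<\beta$; since the Bergman space of a product factors, this winding is precisely the sole obstruction to the map being biholomorphic. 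Because the map twists $z_1$ by $e^{2i\alpha\ln|z_n|}$, in the Mellin variable it induces a frequency shift of $2\alpha$ across an annulus of logarithmic length $\ln\beta$, i.e.\ a total twist equal to the winding $2\alpha\ln\beta$. The spectral analysis of the resulting family then places a singularity of $P$ at the critical Mellin frequency corresponding to Sobolev order $\pi/(2\alpha\ln\beta)$, which is exactly the $L^2$ threshold in Corollary \ref{corollary1}.

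The third step turns this spectral picture into a quantitative failure of boundedness. I would build explicit holomorphic functions $f$ on $\D^\infty_{\alpha\beta}$ whose Fourier--Mellin content is concentrated at the critical frequency, so that $f\in W^{2,s}\cap L^2$ while $Pf$ acquires a boundary singularity of regularity exactly $\pi/(2\alpha\ln\beta)$, and then transfer them to $\D_{\alpha\beta}$ by cutting off near the worm and estimating the resulting $\sigma$-term errors. This proves Corollary \ref{corollary1}. To obtain the full $p$-dependent threshold of Theorem \ref{Theorem}, I would compare the $L^p$ and $L^2$ sizes of these extremal functions: a function concentrated at scale $\delta$ near the singular boundary set has $W^{p,s}$-norm scaling like a power of $\delta$ whose exponent reflects the complex dimension $n$, and balancing the $L^p$-against-$L^2$ scaling introduces exactly the shift $n(1/p-1/2)$.

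The hard part will be twofold. First, making the localization rigorous: one must show that the cutoff errors---the contributions of the $\sigma$-terms and of the cutoff used to import model functions into $\D_{\alpha\beta}$---are genuinely of lower order in the relevant $W^{p,s}$ norms, so that the obstruction survives the passage from the model to the smooth bounded domain. Second, extracting sharp constants: the spectral estimate must locate the singularity of the decomposed projection precisely at Sobolev order $\pi/(2\alpha\ln\beta)$, and the $L^p$-against-$L^2$ comparison must be sharp enough to yield the exact dimensional term $n(1/p-1/2)$ rather than a lossy bound. For the corollary itself only the $L^2$ analysis is needed, so the second difficulty reduces to the clean spectral computation on the model.
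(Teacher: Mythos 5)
Your reduction of Corollary \ref{corollary1} to the case $p=2$ of Theorem \ref{Theorem} is correct, and your broad outline (pass to a model domain where the $\sigma$-terms are absent, decompose the projection using the rotational symmetries, locate a kernel singularity at the exponent $\pi/(2\alpha\ln\beta)$) matches the spirit of the paper. But the step on which everything hinges --- transferring irregularity from a model back to the smooth bounded domain $\D_{\alpha\beta}$ --- is handled in your proposal by localization and cutoffs, and that is a genuine gap. The Bergman projection is a global operator determined by the whole domain: there is no decomposition of $P_{\D_{\alpha\beta}}$ into the projection of your non-smooth model $\D^\infty_{\alpha\beta}$ plus ``lower-order $\sigma$-term errors,'' and multiplying model functions by cutoffs destroys holomorphy, so the projected image cannot be compared term by term. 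Your assertion that ``no obstruction can arise'' away from the worm part is precisely what needs proof, and it is exactly the difficulty that blocked this subject historically: Kiselman \cite{Kiselman91} proved irregularity on \emph{non-smooth} worms, and this did not yield irregularity for the smooth worms; the dilation method of \cite{Barrett92} was invented to bridge that gap.

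What the paper actually does is different in both the model and the transfer mechanism. It uses the anisotropic dilations $\tau_\lambda(z_1,z',z_n)=(2\lambda^2 z_1,\lambda z',z_n)$, under which the rescaled domains $D_\lambda=\tau_\lambda(\D_{\alpha\beta})$ converge, increasingly over the annulus $1\le|z_n|\le\beta$, to the \emph{unbounded} model $D$ of \eqref{LimitDomain}, whose cross-section is a paraboloid rather than your ball; this quasi-homogeneity is what makes the Fourier decomposition and the residue computation of Proposition \ref{Kernel} explicitly tractable (for a ball cross-section the kernel asymptotics are much harder --- this is the territory of the Krantz--Peloso analysis of non-smooth worms). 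The transfer then runs in the contrapositive direction: assuming $P$ bounded on $W^{p,s}(\D_{\alpha\beta})$, one obtains the weighted estimate \eqref{Estimate} on $D_\lambda$ with constant independent of $\lambda$ (the dilation Jacobians produce the factors $\lambda^{2n/p}$, which is where the dimensional shift $n(1/p-1/2)$ ultimately enters --- your cutoff scheme has no mechanism producing it); Lemma \ref{SupposeContrary} passes this uniform estimate to the limit domain $D$; and Lemma \ref{Bad-f}, via the kernel asymptotics, contradicts it there. So the missing idea in your proposal is this scaling/exhaustion argument; without it, your Steps 1 and 3 do not go through, even for the $p=2$ case of the corollary.
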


\begin{remark} 
Assume that the Bergman projection $P_U$ of a domain $U$ bounded on $L^p(U)$
where $p> 2.$ Then the duality and self-adjointness of the Bergman
projection imply that $P_U$ is also bounded on $L^q(U)$
where $\frac{1}{p}+\frac{1}{q}=1.$ Furthermore, interpolation implies that
$P_U$ is bounded on $L^r$ for all $r\in [q,p].$ 
\end{remark}
Therefore, when $s=0$ and $n\alpha\ln\beta>\pi,$ the previous remark and Theorem
\ref{Theorem}  imply the following corollary.
 
\begin{corollary}\label{corollary2}  
The Bergman projection for $\D_{\alpha\beta}$ does not map
$L^p\left(\D_{\alpha\beta}\right)$ to
$L^p\left(\D_{\alpha\beta}\right)$ when
$0<\frac{1}{p}\le\frac{1}{2}-\frac{\pi}{2n\alpha\ln\beta}$ or
$\frac{1}{2}+\frac{\pi}{2n\alpha\ln\beta}\leq \frac{1}{p}<1$.
\end{corollary}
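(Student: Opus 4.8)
The plan is to deduce Corollary \ref{corollary2} formally from Theorem \ref{Theorem} at Sobolev order $s=0$ together with the self-adjointness of the Bergman projection recorded in the preceding remark; no new estimates are needed. I would treat the two asserted intervals for $1/p$ separately, obtaining the first directly from the theorem and the second as its conjugate reflection across $\frac{1}{p}=\frac{1}{2}$.

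First I would specialize Theorem \ref{Theorem} to $s=0$, where $W^{p,0}(\D_{\alpha\beta})=L^p(\D_{\alpha\beta})$. The theorem then asserts that the Bergman projection fails to be bounded on $L^p(\D_{\alpha\beta})$ whenever
\[
0\ge \frac{\pi}{2\alpha\ln\beta}+n\left(\frac{1}{p}-\frac{1}{2}\right),
\]
and this inequality rearranges precisely to $\frac{1}{p}\le\frac{1}{2}-\frac{\pi}{2n\alpha\ln\beta}$. The standing hypothesis $n\alpha\ln\beta>\pi$ guarantees $\frac{\pi}{2n\alpha\ln\beta}<\frac{1}{2}$, so this set of exponents is a nonempty subinterval of $(0,\tfrac12)$ in the variable $1/p$; this is exactly the first range.

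For the second range I would use duality. Since the Bergman projection is self-adjoint, its boundedness on $L^p$ is equivalent to its boundedness on the conjugate space $L^{p'}$ with $\frac{1}{p}+\frac{1}{p'}=1$ --- this is the mechanism behind the remark, read now as an equivalence rather than a one-sided implication. The reflection $\frac{1}{p}\mapsto 1-\frac{1}{p}$ carries the first interval $0<\frac{1}{p}\le\frac{1}{2}-\frac{\pi}{2n\alpha\ln\beta}$ onto $\frac{1}{2}+\frac{\pi}{2n\alpha\ln\beta}\le\frac{1}{p'}<1$, matching endpoints exactly. As the projection is unbounded on every $L^p$ in the first range, self-adjointness forces it to be unbounded on every $L^{p'}$ in the second, which completes the argument.

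I do not anticipate a genuine obstacle, since all of the hard analysis already resides in Theorem \ref{Theorem}. The only points demanding care are bookkeeping ones: verifying that the two intervals are exact conjugate reflections of one another, and confirming that the hypothesis $n\alpha\ln\beta>\pi$ is precisely what makes both intervals nonempty and contained in $(1,\infty)$.
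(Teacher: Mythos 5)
Your proof is correct and takes essentially the same route as the paper: the paper likewise specializes Theorem \ref{Theorem} to $s=0$ (where $W^{p,0}=L^p$) to get the range $0<\frac{1}{p}\le\frac{1}{2}-\frac{\pi}{2n\alpha\ln\beta}$, and then invokes the duality/self-adjointness remark to transfer unboundedness to the conjugate range, with the hypothesis $n\alpha\ln\beta>\pi$ ensuring the intervals are nonempty. The only difference is that you spell out the conjugate-reflection bookkeeping that the paper leaves implicit in its one-line derivation.
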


Theorem \ref{Theorem} is proved in section \ref{ProveThm} below.  The proof is
based on model domain asymptotics developed in section \ref{Model}.

\section{Geometry of the Worm Domains} \label{levi}

\begin{proposition}
The domain $\D_{\alpha\beta}$ is smooth bounded and pseudoconvex whenever
$M$ is sufficiently large.
\end{proposition}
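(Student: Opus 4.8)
The plan is to verify the three assertions—smoothness, boundedness, and pseudoconvexity—for the defining function
\[r(z_1,z',z_n) =\left|z_{1}-e^{2i\alpha \ln|z_{n}|}\right|^{2}+|z'|^{2}-1 +\sigma(|z_n|^2-\beta^2)+ \sigma(1-|z_n|^2),\]
with the main effort going into pseudoconvexity. First I would establish smoothness and boundedness, which should be routine. The function $\ln|z_n|$ is smooth away from $z_n=0$, and the factors $\sigma(1-|z_n|^2)$ and $\sigma(|z_n|^2-\beta^2)$ are designed to force $r>0$ (hence to push the boundary out of the region) whenever $|z_n|$ leaves the annulus $1<|z_n|<\beta$; in particular the boundary stays inside $1<|z_n|<\beta$, away from the singularity of the logarithm, so $r$ is smooth in a neighborhood of $\partial\D_{\alpha\beta}$. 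For boundedness, on the set $r<0$ one reads off $|z'|^2\le 1$ and $\left|z_{1}-e^{2i\alpha\ln|z_n|}\right|^2\le 1$ (so $|z_1|\le 2$) and $1\le|z_n|^2\le\beta^2$, giving a bounded set. One must also check the boundary is a smooth manifold, i.e. that $\nabla r\neq 0$ on $\{r=0\}$; the $|z'|^2$ and $|z_1-\cdots|^2$ terms give a nonvanishing gradient except possibly on a small set that I would rule out directly.

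The core of the proposition is pseudoconvexity, i.e. that the complex Hessian (Levi form) of $r$ is positive semidefinite on the complex tangent space along $\partial\D_{\alpha\beta}$. The strategy is to compute the full complex Hessian $\left(\partial^2 r/\partial z_j\,\partial\bar z_k\right)$ and exhibit it as a sum of a manifestly positive semidefinite part plus a part controlled by the large constant $M$. The terms $|z'|^2$ contribute the identity on the $z'$-variables, which is strongly positive. The interesting interaction is between $z_1$ and $z_n$ through the phase $e^{2i\alpha\ln|z_n|}$; writing $w=z_1-e^{2i\alpha\ln|z_n|}$, the term $|w|^2$ has a complex Hessian that is positive semidefinite in its own right (it is $|\partial w|^2$ type) but the dependence of $w$ on $z_n$ produces cross terms and a negative contribution in the $z_n\bar z_n$ entry coming from the second derivatives of the phase. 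The key computation is to show that this negative contribution is bounded uniformly (it involves $\alpha$ and $1/|z_n|^2$, both bounded on the annulus $1\le|z_n|\le\beta$), while the convexifying terms $\sigma(1-|z_n|^2)+\sigma(|z_n|^2-\beta^2)$ supply, via the large parameter $M$, an arbitrarily large positive contribution to the $z_n\bar z_n$ entry near the boundary.

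The hard part—and the step I would expect to require the most care—is the behavior of the $\sigma$-terms near the boundary. Because $\sigma(t)=Me^{-1/t}$ vanishes to infinite order at $t=0$, both $\sigma$ and all its derivatives tend to $0$ as $|z_n|^2\to 1^+$ or $|z_n|^2\to(\beta^2)^-$, so the convexifying term degenerates precisely on the part of $\partial\D_{\alpha\beta}$ where $|z_n|^2=1$ or $|z_n|^2=\beta^2$; there the domain must be pseudoconvex for a reason independent of $M$. The plan is therefore to split the boundary into two regimes. In the interior of the annulus (where $|z_n|^2$ is bounded away from $1$ and $\beta^2$) one has $\sigma''>0$ with a definite lower bound, and I would choose $M$ large enough that $M\,\sigma''$ dominates the bounded negative phase contribution, making the Levi form positive. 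On the degenerate part near $|z_n|^2\in\{1,\beta^2\}$ I would instead restrict to the complex tangent space: there the tangential constraint $\partial r=0$ forces the $z_n$-component of any complex tangent vector to be small relative to the other components (since $\partial r/\partial\bar z_n$ is dominated by the $w$-term, not by the vanishing $\sigma'$), so the potentially troublesome $z_n\bar z_n$ entry is multiplied by a vector component that the constraint makes negligible, and the positive $|z'|^2$ and $|w|^2$ contributions carry the day. Combining these two regimes via a partition of the boundary, and checking that the threshold value of $M$ can be chosen uniformly, completes the proof.
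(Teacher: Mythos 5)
Your smoothness and boundedness outline is workable, but already contains a misreading that matters later: the domain is \emph{not} confined to the closed annulus $1\le|z_n|\le\beta$. Since $\sigma$ vanishes to infinite order at $0$, the set $\{r<0\}$ spills past $|z_n|=\beta$ and below $|z_n|=1$ (the spill shrinking as $M$ grows); what is true, and what the paper uses, is that for $M>e^2$ the closure lies in $1/2<|z_n|<\sqrt{\beta^2+1/2}$, safely away from $z_n=0$. The first genuine gap in your pseudoconvexity plan is that your two-regime split rests on a wrong picture of where the $\sigma$-terms live: $\sigma(|z_n|^2-\beta^2)$ and $\sigma(1-|z_n|^2)$ vanish \emph{identically} on the closed annulus $1\le|z_n|\le\beta$, so in your ``interior of the annulus'' regime there is no convexifying term at all --- $\sigma''\equiv 0$ there, and no choice of $M$ creates anything with which to dominate the phase contribution. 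On that (wormy) part of the boundary, pseudoconvexity must come from the structure of $\left|z_1-e^{2i\alpha\ln|z_n|}\right|^2+|z'|^2-1$ itself, independently of $M$. Relatedly, $M$ does not act by scaling up a positive Hessian entry: every $\sigma$-contribution is linear in $M$, so the sign of their combination is $M$-independent. What largeness of $M$ actually buys is that on the closure of the domain $t=|z_n|^2-\beta^2$ stays small, and for small $t$ the one-variable inequality $|z_n|^2\sigma''(t)+(1-2\alpha)\sigma'(t)>0$ holds (note $\sigma''(t)<0$ for $t>1/2$, so this is false without the smallness of $t$).

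The second, and central, gap is your fallback argument for the degenerate region: the claim that tangency forces the $w_n$-component of a complex tangent vector to be negligible is false exactly where it is needed. At the weakly pseudoconvex boundary points $(0,\ldots,0,z_n)$, $1\le|z_n|\le\beta$, one computes $\partial r/\partial z_j=0$ for $j\geq 2$ and $\partial r/\partial z_1=-e^{-2i\alpha\ln|z_n|}\neq 0$, so the complex tangent space is $\{w_1=0\}$ and contains $(0,\ldots,0,w_n)$ with $w_n$ arbitrary: the constraint restricts $w_1$, not $w_n$. At nearby boundary points (take $z'=0$ and write $z_1e^{-2i\alpha\ln|z_n|}=1+e^{i\theta}$) the tangency condition gives $|w_1|=\frac{2\alpha|\sin\theta|}{|z_n|}|w_n|$, and the three terms $|w_1|^2$, $2\re\left(w_1\bar w_n r_{z_1\bar z_n}\right)$, and $|w_n|^2r_{z_n\bar z_n}$ are all of the same order; nonnegativity of the Levi form holds only because the first two cancel \emph{exactly}, leaving $|w_n|^2\frac{2\alpha^2}{|z_n|^2}\re\left(z_1e^{-2i\alpha\ln|z_n|}\right)\ge 0$. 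No domination argument of the type you describe can produce this; an algebraic identity is required, and supplying it is the whole content of the proposition. The paper's device is to multiply $r$ by the positive factor $e^{\Arg(z_n^{2\alpha})}$, which converts the troublesome term into $-2\re\left(z_1z_n^{-2\alpha i}\right)$ --- pluriharmonic, hence harmless --- and reduces the problem to plurisubharmonicity of $r_2=\left(|z_1|^2+|z'|^2+\sigma(|z_n|^2-\beta^2)\right)e^{\Arg(z_n^{2\alpha})}$, which follows from a Cauchy--Schwarz estimate together with the one-variable inequality above. Your proposal has no substitute for this step.
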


\begin{proof} 
Start by requiring $M>e^2$.  Then  $\Omega \subset \{z\in \C^n:
|z_1|<3,|z'|<2,1/2 <|z_n|<\sqrt{\beta^2+1/2}\}$. Then $\Omega$ is bounded.
Also, by considering $z_1$-, $z'$-, and $z_n$-derivatives in order it is easy to
check that the gradient of $r(z)$ does not vanish on $\{z\in \C^n:r(z)=0\}$, so
$\Omega$ has smooth boundary.

It remains to show that $\D_{\alpha\beta}$ is pseudoconvex. It suffices to
check this locally.  We focus on the case $|z_n|\geq (1+\beta)/2$, the case
$|z_n|\leq (1+\beta)/2$ being similar. 

Multiplying $r(z)$ by $e^{\Arg(z^{2\alpha}_{n})}$
we obtain the new defining function
\begin{equation*}
r_1(z)=r_2(z)-2\,\re\left(z_1 z_n^{-2\alpha i}\right)
\end{equation*}
where
\[r_2(z) = \left( |z_1|^2 + |z'|^2+\lambda(z_n)\right) e^{\Arg(z^{2\alpha}_{n})}
\text{ and } \lambda(z_n)=\sigma\left(|z_n|^2-\beta^2\right).\]
Since $2\,\re\left(z_1 z_n^{-2\alpha i}\right)$ is pluriharmonic it will
suffice now to show that $r_2$ is plurisubharmonic.
To simplify the notation let $A(z)=|z_1|^2+|z'|^2+\lambda(z_n)$ and
$B(z)=\Arg(z^{2\alpha}_{n}).$ Let $W=\sum_{j=1}^nw_j \partial/\partial z_j$ with
$w_j$ constant. In the following calculations  $H_{f}(W)$ denote the complex
Hessian of $f$ in the direction $W.$ Then $W(r_2)=e^B(W(A)+AW(B))$ and
Cauchy-Schwarz inequality implies that 
\[-2\re\left( \overline w_n B_{\overline 
z_n}\sum_{j=1}^{n-1} w_j\overline z_j \right)\leq
\sum_{j=1}^{n-1}|w_j|^2+|\overline w_nB_{\overline 
z_n}|^2 \sum_{j=1}^{n-1}|z_j|^2.\]
Using the above inequality in the second line below we get
\begin{align*}
H_{r_2}(W)
&=e^B(H_A(W)+2\re(W(A)\overline{W}(B))+A|W(B)|^2+A H_B(W)) \\
&\geq |w_n|^2e^B(\lambda_{z_n\overline  z_n}+2\re(\lambda_{z_n}B_{\overline 
z_n})+\lambda |B_{\overline  z_n}|^2).
\end{align*}
One can check that  
$\lambda_{z_n}(z_n)=\overline{z}_n \sigma'(|z_n|^2-\beta^2),|B_{\overline
z_n}|=\frac{\alpha}{|z_n|},$ and 
\begin{equation*}
\lambda_{z_n\overline{z}_n}(z_{n})= |z_n|^2\sigma''(|z_n|^2-\beta^2)
+\sigma'(|z_n|^2-\beta^2).
\end{equation*}
We note that since $\lambda
(z_n)=\lambda_{z_n}(z_n)=\lambda_{z_n\overline z_n}(z_n)=0$ for 
$|z_n|\leq \beta,$ without loss of generality  we can assume that
$|z_n|>\beta.$ Using  the fact that $\beta<|z_n|<\sqrt{\beta^2+1/2}$ and
$t=|z_n|^2-\beta^2$ on the third line below we get 
\begin{align*}
\lambda_{z_n\overline z_n}
+2\re(\lambda_{z_n}B_{\overline  z_n})+\lambda |B_{\overline  z_n}|^2 \geq & 
\lambda_{z_n\overline z_n}
-\frac{2\alpha|\lambda_{z_n}|}{|z_n|}\\
\geq &|z_n|^2\sigma''(|z_n|^2-\beta^2)
+(1-2\alpha)\sigma'(|z_n|^2-\beta^2) \\
=& Me^{-1/t} \left(
\frac{\beta^2+t}{t^4}-\frac{2(\beta^2+t)}{t^3}+\frac{1-2\alpha}{t^2}
\right)\\
=&\frac{M(\beta^2+t)e^{-1/t}}{t^4} \Big( 1-2t+\frac{(1-2\alpha)t^2}{\beta^2+t}
\Big)
\end{align*}
We can choose $M$ sufficiently large so that $z\in \D_{\alpha\beta}\cap
\{z\in\C^n:|z_n|\geq \beta\}$  implies that $t$ is sufficiently small. In return, this
implies that 
\[1-2 t+\frac{(1-2\alpha)t^2}{\beta^2+t} > 0.\]
The last inequality above implies that 
$\lambda_{z_n\overline z_n} +2\re(\lambda_{z_n}B_{\overline  z_n})+\lambda
|B_{\overline  z_n}|^2 \geq 0$ for 
$z\in \D_{\alpha\beta}$ such that $|z_n|\geq (1+\beta)/2.$
Hence, the domain $\D_{\alpha\beta}$ is pseudoconvex for sufficiently large $M.$
\end{proof}


\begin{remark}
A similar calculation shows that the set of weakly pseudoconvex points in the
boundary is the set $\{(0,\ldots,0,z_n)\in \C^n:1\leq |z_n| \leq \beta \}.$
\end{remark}

\begin{remark}
We note that regularity of the $\dbar$-Neumann operator is closely connected
to regularity of the Bergman projection \cite{BoasStraube90}. In particular, 
if the $\dbar$-Neumann operator of a smooth bounded pseudoconvex domain is
globally regular then  the Bergman projection satisfies Bell's condition R.  
One can show that on the set $\{(0,\ldots,0,z_{n})\in \C^n:1\leq |z_n| \leq
\beta \}$ the Levi form of $r$ has only one vanishing eigenvalue as the Levi
form has positive eigenvalues in the direction transversal to $z_{n}$-axis. In
this case Theorem 1 in \cite{SahutogluStraube06} applies and it implies that the
$\dbar$-Neumann operator is not compact on $(0,1)$-forms (compactness of the
$\dbar$-Neumann operator implies that it is globally regular
\cite{KohnNirenberg65}). However, to show irregularity of the Bergman
projection in Sobolev scale one needs to work harder. 
\end{remark}

\section{Model Domains} \label{Model}

In this section we are going to define a family of simplified model domains and
calculate the asymptotics for the Bergman kernels of these model domains. We
use a modified version of the method developed by the first author
in \cite{Barrett92}. 

For $\lambda>0$ let
\begin{align*}
\tau_{\lambda}(z_{1},z',z_{n})&=(2\lambda^{2}z_{1},\lambda z',z_{n}),\\
r_{\lambda}&=\lambda^{2} r \circ \tau_{\lambda}^{-1},\\
D_{\lambda}&=\tau_{\lambda}(\D_{\alpha\beta}).
\end{align*}
Then for $1\le|z_{n}|\le\beta$ we have $r_{\lambda}\searrow r_{\infty}$ as
$\lambda\to \infty$ where 
\[r_{\infty}(z_{1},z',z_{n})=|z'|^{2}-\re\left(z_{1}e^{
-2\alpha i\ln|z_{n}|}\right);\] 
for $|z_n|$ outside this range we have $r_{\lambda}\to\infty$.  It follows that
the $D_{\lambda}$ converge in an appropriate sense to the limit domain
\begin{equation}\label{LimitDomain}
D=D_{\alpha\beta}= \left\{(z_{1},z',z_{n})\in \C^{n}:\re\left(
z_{1}e^{-2\alpha
i\ln|z_{n}|}\right) >|z'|^{2},1<|z_{n}|<\beta\right\},
\end{equation}
the limit being increasing over the annulus $1\le|z_{n}|\le\beta$.

Bergman projection $P$ of $D$ is defined by $Pf(z)=\int_D K(z,w) f(w)\,dV(w)$
where $f\in L^{2}(D)$ and  $K:D\times D\to \C,$ is the Bergman kernel
characterized by the following conditions
\begin{itemize} 
 \item [i.] $K(z,w)\in A^2(D)$ for fixed $w\in D$,
\item[ii.] $K(w,z)=\overline{K(z,w)}$,
\item[iii.] $\int_D K(z,w) f(w)\,dV(w)=f(z)$ for $f\in A^{2}(D)$.
\end{itemize}
If $f_1, f_2,\dots$ is an orthonormal basis for $A^{2}(D)$ then we
have $K(z,w)=\sum_j f_j(z)\overline{f_j(w)}.$  

To study the Bergman kernel of $D$ we begin by performing a Fourier
decomposition. We define   
\begin{equation}\label{projection}
(P_{Jk}f)(z_{1},z',z_{n})=\frac{1}{2^{n-1}\pi^{n-1}}\int\limits_{[-\pi,\pi]^{n-1
}}f(z_{1},e^{iS}z',e^{it}z_{n})e^{-iJ S}e^{-ikt}dS\,dt,
\end{equation}
where 
\begin{align*}
e^{iS}&=(e^{is_{1}},\ldots,e^{is_{n-2}}),\\
S&=(s_{1},\ldots,s_{n-2})\in [-\pi,\pi]^{n-2},\\
J&=(j_{1},\ldots,j_{n-2})\in \N^{n-2},\\
k&\in \Z,\\
J S&=j_{1}s_{1}+\cdots+j_{n-2}s_{n-2},\\
dS&=ds_{1}\cdots ds_{n-2}.
\end{align*}
Let us define the mapping 
$\rho_{St}(z_{1},z',z_{n})=(z_{1},e^{iS}z',e^{it}z_{n}).$
Then $P_{Jk}$ is the orthogonal projection from $A^{2}(D)$ onto 
\[A_{Jk}^{2}(D)=\{f\in A^{2}(D): f\circ \rho_{St}=e^{iJS}e^{ikt}f \text{ for all
} S,t\}.\]
Therefore the Bergman space $A^{2}(D)$  can be written as an orthogonal sum 
\[A^{2}(D)=\underset{\begin{subarray}{c}
 J\in \N^{n-2},\,k\in\Z \end{subarray}}{\bigoplus} A_{Jk}^{2}(D)\]
and the Bergman kernel $K(z,w)$ for $D$ satisfies
\[K(z,w)=\sum\limits_{J\in \N^{n-2},\,k\in\Z}K_{Jk}(z,w)\] 
where $K_{Jk}(z,w)$ is the kernel for $A_{Jk}^{2}(D).$  

One can show that for $f\in A_{Jk}^{2}(D)$ the function
$f(z_{1},z',z_{n})z_{2}^{-j_1}\cdots z_{n-1}^{-j_{n-2}}z_{n}^{-k}$ is  a
function that is locally independent of $(z',z_n)$.  We notate such functions as
functions of $z_1$, where it is understood that $z_1$ ranges over the Riemann
domain described by $-\pi/2<\Arg z_1<2\alpha\ln\beta+\pi/2$.

Let $|J|=j_{1}+\cdots+j_{n-2}.$  Then a square integrable holomorphic function
$f$ on $D$ can be written as   
\[ f(z)=\sum_{J\in \N^{n-2},\,k\in \Z }F_{Jk}(z)\,\]
where
\[F_{Jk}(z_{1},z',z_{n})=z_{1}^{-\frac{|J|+n}{2}}f_{Jk}(z_{1})z'^{J}z_{n}^{k}\]
and the sum converges locally uniformly. 

Now we will calculate the $L^{2}$-norm of $F_{Jk}$ on $D.$ Let
$z_{1}=r_{1}e^{i\theta_{1}}, r_{j}=|z_{j}|$ for $j=1,\ldots n
,r'=\sqrt{r_{2}^{2}+\cdots+r_{n-1}^{2}},s=\ln|z_{n}|^{2}.$ Then $D$ is described
by the inequalities 
\begin{align*}
0&< r_{1}<\infty,\\
0&<s<2\ln\beta,\\
 |\theta_{1}-\alpha s|&<\pi/2,\\
 0&\leq r'<\sqrt{r_{1}\cos(\theta_{1}-\alpha s)}.
\end{align*}
We have 
\begin{eqnarray} \displaystyle 
\nonumber  \|F_{Jk}\|^{2}_{D}&=&
\int\limits_{D}
 |f_{Jk}(r_{1}e^{i\theta_{1}})|^{2}r_{1}^{-|J|-n+1}r_2^{2j_2+1}\cdots
r_{n-1}^{2j_{n-2}+1} r_{n}^{2k+1}d\theta_{1}\cdots d\theta_{n} dr_{1}\cdots
dr_{n} \\
 \nonumber &=&
 C_{nJ} \int\limits_{\substack{0< r_{1}<\infty \\
 |\theta_{1}-\alpha s|<\pi/2 \\
  0<s<2\ln \beta}}
   |f_{Jk}(r_{1}e^{i\theta_{1}})|^{2}\cos^{|J|+n-2}(\theta_{1}-\alpha
s)e^{s(k+1)}r_{1}^{-1} d\theta_{1} dr_{1} ds  \\
 &=& \label{IntOnGamma}
\int\limits_{\substack{
 0< |z_{1}|<\infty \\
 -\pi /2<\arg(z_{1})<2\alpha \ln \beta +\pi /2 }}
 |f_{Jk}(z_{1})|^{2}W_{Jk}(\theta_{1})|z_{1}|^{-2}\,dV(z_{1})
 \end{eqnarray}
 where  $C_{nJ}$ is a positive constant,
\[W_{Jk}(\theta_{1})=C_{nJ}\int_{-\infty}^{\infty}\cos^{|J|+n-2}(\theta_{1}
-\alpha t)\chi_{\pi/2}(\theta_{1}-\alpha t)e^{t(k+1)}\chi_{\ln
\beta}(t-\ln\beta)\,dt, \]
and $\chi_a(t)$ is the characteristic function of the interval $[-a,a]$ for
$a>0.$  (The positivity of   $C_{nJ}$ follows from the fact that we are only
integrating over positive values of $r_j$.)

Let us use a change of coordinates $z=\ln z_1$ in the last integral to obtain 
\begin{align}\label{ChangeCoord}
\|F_{Jk}\|^{2}_{D}&= \int\limits_{\substack{
 -\infty< x<\infty \notag\\
 -\pi /2<y<2\alpha \ln \beta +\pi /2 }}|f_{Jk}(e^z)|^{2}W_{Jk}(y)\,dV(z) \\
 &= \int\limits_{\substack{-\infty< x<\infty \\
 -\pi /2<y<2\alpha \ln \beta +\pi /2 }}|\widetilde
f_{Jk}(z)|^{2}W_{Jk}(y)\,dV(z)
 \end{align}
where $z=x+iy$ and  $\widetilde f_{Jk}(z)=f_{Jk}(e^{z}).$  
Then $\widetilde f_{Jk}$ is a square integrable holomorphic function on
$S_{\alpha\beta}=\{z\in \C:-\pi/2<\im(z)<\pi /2+2\alpha \ln \beta \}$ with
weight $W_{Jk}.$ Furthermore, the Bergman kernel $K_{Jk}$ for $A_{Jk}^{2}(D)$
can be calculated as 
\begin{equation}\label{EqnKernelTransfrom}
K_{Jk}(z,w)=K_{Jk}^{\alpha\beta} (\ln z_{1},\ln
w_{1})\frac{z'^{J}z^{k}_{n}\overline  w'^{J}\overline 
w^{k}_{n}}{z_{1}^{\frac{|J|+n}{2}}\overline  w^{\frac{|J|+n}{2}}_{1}}
\end{equation}
where $K_{Jk}^{\alpha\beta}$ is the Bergman kernel on $S_{\alpha\beta}$ with the
weight $W_{Jk}.$ 
(One way to see this is to note that \eqref{ChangeCoord} allows us to convert an
orthonormal basis for the Bergman space on $S_{\alpha\beta}$ with  weight
$W_{Jk}$ to an orthonormal basis for $A_{Jk}^{2}$.)

Let $\F(f)$ denote the Fourier transform of $f$; thus
$\F(f)(\xi)=\frac{1}{\sqrt{2\pi}}\int_{-\infty}^{\infty}f(t)e^{-i\xi t}dt$ and
$\F^{-1}(f)(x)=\frac{1}{\sqrt{2\pi}}\int_{-\infty}^{\infty}f(\xi)e^{i\xi
t}d\xi$.

\begin{proposition}\label{JkKernel} 
$K^{\alpha\beta}_{Jk}$ is given by the integral
\begin{equation}\label{IntFormula}
K^{\alpha\beta}_{Jk}(z,w)=\frac{1}{\sqrt{2\pi}}\int_{\R} \frac{ e^{i(z-\overline
 w)\xi}}{\F(W_{Jk})(-2i\xi)}d\xi.
\end{equation}
\end{proposition}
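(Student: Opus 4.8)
The plan is to exploit the two symmetries of the weighted space appearing in \eqref{ChangeCoord}: the integration runs over the horizontal strip $S_{\alpha\beta}$, which is invariant under the real translations $z\mapsto z+t$, while the weight $W_{Jk}$ depends only on $y=\im z$. This suggests diagonalizing the weighted Bergman space by a Fourier transform in the variable $x=\re z$. First I would record a Paley--Wiener type representation: every $f$ in the weighted Bergman space on $S_{\alpha\beta}$ can be written as
\[ f(z)=\frac{1}{\sqrt{2\pi}}\int_{\R}a(\xi)\,e^{i\xi z}\,d\xi,\qquad a=\F\!\left(f|_{\{y=0\}}\right), \]
and conversely any such $a$ with finite weighted norm produces a holomorphic $f$. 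The exponentials $e^{i\xi z}=e^{i\xi x-\xi y}$ are exactly the holomorphic characters adapted to these symmetries.

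Next I would compute the weighted norm. Shifting the contour of the $x$-integral defining $\F\!\left(f(\cdot+iy)\right)$ from height $0$ to height $y$ (Cauchy's theorem, using decay of $f$ as $\re z\to\pm\infty$) gives $\F\!\left(f(\cdot+iy)\right)(\xi)=e^{-\xi y}a(\xi)$. Applying Plancherel in $x$ for each fixed $y$ and then integrating against $W_{Jk}(y)$ yields
\[ \|f\|^2=\int_{\R}|a(\xi)|^2\left(\int_{\R}W_{Jk}(y)\,e^{-2\xi y}\,dy\right)d\xi. \]
By the definition of $\F$ the inner integral equals $\sqrt{2\pi}\,\F(W_{Jk})(-2i\xi)$, and it is a positive — indeed entire, since $W_{Jk}$ has compact $y$-support — function of $\xi$. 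Thus $f\mapsto a$ is an isometry of the weighted Bergman space onto $L^2\!\left(\R,\sqrt{2\pi}\,\F(W_{Jk})(-2i\xi)\,d\xi\right)$.

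With the space diagonalized, the kernel can be read off. Writing the point evaluation in the transform variable, or equivalently treating $\{e^{i\xi z}\}_{\xi\in\R}$ as a generalized orthogonal family with $\langle e^{i\xi\cdot},e^{i\eta\cdot}\rangle$ proportional to $\F(W_{Jk})(-2i\xi)\,\delta(\xi-\eta)$, the reproducing kernel emerges as the superposition
\[ K^{\alpha\beta}_{Jk}(z,w)=c\int_{\R}\frac{e^{i\xi z}\,\overline{e^{i\xi w}}}{\F(W_{Jk})(-2i\xi)}\,d\xi. \]
Since $\xi$ is real, $\overline{e^{i\xi w}}=e^{-i\xi\overline w}$, so the numerator is $e^{i(z-\overline w)\xi}$; tracking the normalization constants coming from the two Fourier/Plancherel factors produces the prefactor and hence \eqref{IntFormula}. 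I would finish by checking the three defining properties of the Bergman kernel directly against this integral: holomorphy in $z$ (differentiation under the integral sign), the conjugate symmetry $K(w,z)=\overline{K(z,w)}$ (from the reality and positivity of $\F(W_{Jk})(-2i\xi)$), and the reproducing property (which reduces, via the norm identity above, to Fourier inversion).

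The main obstacle is the analytic justification rather than the formal computation: I must legitimately establish the representation and the contour shift $\F\!\left(f(\cdot+iy)\right)(\xi)=e^{-\xi y}a(\xi)$ for \emph{every} element of the weighted space, not merely for a convenient dense subclass, and control the vanishing of the vertical boundary terms as $\re z\to\pm\infty$. The natural route is to prove the norm identity first on a dense subspace of functions with good horizontal decay, conclude that $f\mapsto a$ is an isometry, and then extend by continuity; one also needs convergence of the $\xi$-integral in \eqref{IntFormula}, which follows from the growth and strict positivity of $\F(W_{Jk})(-2i\xi)$ guaranteed by the bounded $y$-support of $W_{Jk}$.
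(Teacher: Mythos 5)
Your proposal is correct and follows essentially the same route as the paper, whose proof is simply a citation to Barrett (1992) and to Lemma 6.5.1 of Chen--Shaw: those sources establish the formula exactly by the Fourier diagonalization you describe (translation invariance in $\re z$, line-by-line Plancherel, Paley--Wiener justification of the contour shift, then reading off the reproducing kernel in the transform variable). One caveat: with the paper's normalization $\F(f)(\xi)=\frac{1}{\sqrt{2\pi}}\int f(t)e^{-i\xi t}\,dt$, honest constant-tracking yields the prefactor $(2\pi)^{-3/2}$ rather than $(2\pi)^{-1/2}$ --- equivalently $K^{\alpha\beta}_{Jk}(z,w)=\frac{1}{2\pi}\int_{\R}e^{i(z-\overline w)\xi}\bigl(\int W_{Jk}(y)e^{-2\xi y}\,dy\bigr)^{-1}d\xi$ --- so your assertion that the normalizations produce exactly the stated prefactor is slightly off, but this is a harmless multiplicative constant: everything downstream (Propositions \ref{WeightTransform} and \ref{Kernel}) uses only the pole locations and the nonvanishing of the residue coefficients.
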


\begin{proof}
See \cite{Barrett92} and \cite[Lemma 6.5.1]{ShawBook}.
\end{proof}

Note also that $-\pi<\text{Im}(z-\overline  w)<\pi+4\alpha\ln\beta $ for $
z,w\in S_{\alpha\beta}.$

\begin{proposition}\label{WeightTransform}
The Fourier transform of $W_{Jk}$ is given by 
\begin{equation}\label{WeightTransformFormula}
\F(W_{Jk})(\xi)
=D_{nJ}e^{-\frac{i\xi\pi}{2}}\frac{E_{Jk}(\xi)}{
(\xi+|J|+n-2)(\xi+|J|+n-4)\ldots(\xi-|J|-n+2)}  
\end{equation}
 where 
\[E_{Jk}(\xi)=\left(e^{i\xi\pi}-(-1)^{|J|+n}\right)\left(\frac{e^{
2(k+1-i\alpha\xi)\ln\beta}-1}{k+1-i\alpha\xi}\right). \]
\end{proposition}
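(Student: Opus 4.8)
The plan is to compute the Fourier transform $\F(W_{Jk})(\xi)$ directly from its defining integral. Recall that
\[W_{Jk}(\theta_1)=C_{nJ}\int_{-\infty}^{\infty}\cos^{|J|+n-2}(\theta_1-\alpha t)\chi_{\pi/2}(\theta_1-\alpha t)e^{t(k+1)}\chi_{\ln\beta}(t-\ln\beta)\,dt,\]
so $W_{Jk}$ is a convolution (in the variable $\theta_1$) of the compactly supported function $g(\theta)=\cos^{|J|+n-2}(\theta)\chi_{\pi/2}(\theta)$ with the reflected, dilated, exponentially weighted indicator arising from the $t$-integration. First I would recognize this as a convolution so that $\F(W_{Jk})$ factors as a product of two transforms. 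Concretely, writing $m=|J|+n-2$, the $\theta$-dependent factor contributes $\F(\cos^m(\cdot)\chi_{\pi/2})(\xi)$, while the $t$-integration contributes the exponential-decay factor. This factorization is what produces the product structure of \eqref{WeightTransformFormula}: the denominator and the sign factor $e^{-i\xi\pi/2}(e^{i\xi\pi}-(-1)^{|J|+n})$ come from the cosine-power transform, and the bracketed ratio in $E_{Jk}$ comes from integrating the exponential over the annulus.

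The key computational step is evaluating $\F\big(\cos^m(\theta)\chi_{\pi/2}(\theta)\big)$. My approach would be to expand $\cos^m\theta=2^{-m}(e^{i\theta}+e^{-i\theta})^m=2^{-m}\sum_{\ell=0}^m\binom{m}{\ell}e^{i(m-2\ell)\theta}$ and integrate each term against $e^{-i\xi\theta}$ over $[-\pi/2,\pi/2]$. Each monomial integral is elementary and yields a term with $\sin\big((\xi-(m-2\ell))\pi/2\big)$ over $(\xi-(m-2\ell))$; using $\sin((\xi-j)\pi/2)=\pm\sin(\xi\pi/2)$ or $\pm\cos(\xi\pi/2)$ depending on parity, one can pull out a common trigonometric factor that, after accounting for the overall parity $(-1)^{m}=(-1)^{|J|+n}$, assembles into $e^{-i\xi\pi/2}\big(e^{i\xi\pi}-(-1)^{|J|+n}\big)$. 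The remaining rational pieces, when combined over the common denominator, collapse telescopically into the product $(\xi+|J|+n-2)(\xi+|J|+n-4)\cdots(\xi-|J|-n+2)$ of the stated formula; alternatively one can prove this polynomial denominator identity by induction on $m$ using the recursion $\cos^{m}=\cos^{m-2}-\cos^{m-2}\sin^2$ together with integration by parts. I would then verify that the leading constant matches $D_{nJ}$ (up to the positive normalizing constants already absorbed in $C_{nJ}$).

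For the second factor, performing the $t$-integration is straightforward: on the support $\chi_{\ln\beta}(t-\ln\beta)$, i.e. $0<t<2\ln\beta$, the reflected-dilated indicator $\chi_{\pi/2}(\theta_1-\alpha t)$ shifts the $\theta$-transform by $e^{-i\alpha\xi t}$, and integrating $e^{t(k+1)}e^{-i\alpha\xi t}=e^{t(k+1-i\alpha\xi)}$ over $(0,2\ln\beta)$ gives exactly $\tfrac{e^{2(k+1-i\alpha\xi)\ln\beta}-1}{k+1-i\alpha\xi}$, the bracketed factor in $E_{Jk}$. Care is needed to track how the dilation by $\alpha$ inside the cosine argument transfers onto the transform variable, so that the $\theta$-factor is evaluated at $\xi$ while the exponential carries $-i\alpha\xi$; this bookkeeping of the shift-and-dilate is where I expect the only real risk of error. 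The main obstacle, then, is not any single hard estimate but the careful assembly of the cosine-power transform — getting the parity-dependent sign $(-1)^{|J|+n}$, the prefactor $e^{-i\xi\pi/2}$, and the telescoping of the denominator all to match exactly — since a sign or phase slip there would propagate through \eqref{IntFormula} and corrupt the kernel asymptotics used later. Once both factors are in hand, multiplying them and collecting constants yields \eqref{WeightTransformFormula}.
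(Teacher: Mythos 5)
Your proposal is correct and follows essentially the same route as the paper: factor $W_{Jk}$ as a convolution, expand $\cos^{m}$ binomially and integrate termwise to get the phase factor $e^{-i\xi\pi/2}\left(e^{i\xi\pi}-(-1)^{|J|+n}\right)$, integrate $e^{t(k+1-i\alpha\xi)}$ over $(0,2\ln\beta)$ for the bracketed factor, and multiply the two transforms. The one step you only gesture at --- the ``telescoping'' collapse of $\sum_{\ell}\binom{m}{\ell}\frac{(-1)^{\ell}}{\xi+m-2\ell}$ into the product denominator --- is exactly the paper's Lemma \ref{SumEval}, proved there by induction via Pascal's rule; since that partial-fraction identity is true and standard, your plan goes through as stated.
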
 

We postpone the proof of this Proposition. 

To apply residue methods to \eqref{IntFormula} we need to find the zeros of
$\F(W_{Jk})(-2i\xi)$. Let us denote the set $\{s\in \mathbb{Z}:-m\leq s\leq m\}$
by $\mathbb{I}(m).$ From Proposition \ref{WeightTransform} we see that if
$|J|+n$ is even  then the zeros of $\F(W_{Jk})(-2i\xi)$ are located at
\[\left\{ mi:m\in
\mathbb{Z}\setminus\mathbb{I}\left(\frac{|J|+n-2}{2}\right)\right\} \bigcup
\left\{\frac{m\pi i}{2\alpha\ln\beta}+\frac{k+1}{2\alpha
}:m\in \mathbb{Z}\setminus\{0\}\right\} \] 
and in case  $|J|+n$ is odd they are located at
\begin{multline*}
\left\{ mi+\frac{ i}{2}:m\in
\mathbb{Z}\setminus\left(\mathbb{I}\left(\frac{|J|+n-3}{2}\right)\cup\{
-(|J|+n-1)/2\}\right)\right\} \\
\bigcup \left\{\frac{m\pi i}{2\alpha\ln\beta}+\frac{k+1}{2\alpha
}:m\in \mathbb{Z}\setminus\{0\}\right\}. 
\end{multline*} 

 For simplicity we focus now on the case  $J=0,\, k=-2$; note that this
guarantees that the zeros enumerated above are simple  (see Remark \ref{kChoice}
below).
 
Let $\nu_{\alpha\beta}=\frac{\pi}{2\alpha\ln\beta}$ and
$\mu_{\alpha}=\frac{1}{2\alpha }>0$.

\begin{proposition} \label{Kernel}
The  kernels $K_{0,-2}$ satisfy
\begin{multline}\label{BergmanKernel}
K_{0,-2}(z,w)= \sum_{\ell=0}^{[\nu_{\alpha\beta}-n/2]}
C_\ell z_{1}^{\ell}\overline w_{1}^{-\ell-n}
z_n^{-2}\overline  w_n^{-2}
+Cz_{1}^{\nu_{\alpha\beta}-n/2-i\mu_{\alpha} }\overline 
w_1^{-\nu_{\alpha\beta}-n/2+i\mu_{\alpha}} z_n^{-2}\overline  w_n^{-2}
+\rem(z,w)
\end{multline}  
 where $\varepsilon>0$,  the constants $C$ and $C_\ell$ are nonzero  and  the
remainder term $\rem(z,w)$ satisfies
 \begin{equation*}
\left(\frac{\partial}{\partial z_{1}}\right)^{m}\rem(z,w) =
O\left(z_1^{\nu_{\alpha\beta}-n/2+\varepsilon-m}
\overline w_1^{-\nu_{\alpha\beta}-n/2-\varepsilon}\right)
\end{equation*}
uniformly on closed subannuli of $1<|z_{n}|<\beta$.
\end{proposition}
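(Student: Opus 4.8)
The plan is to evaluate the integral formula \eqref{IntFormula} for $K_{0,-2}^{\alpha\beta}$ by the residue calculus and then transport the result back to $K_{0,-2}$ through \eqref{EqnKernelTransfrom}. Substituting the explicit expression \eqref{WeightTransformFormula} for $\F(W_{0,-2})(-2i\xi)$, the integrand in \eqref{IntFormula} is a meromorphic function of $\xi$ whose poles are exactly the zeros of $\F(W_{0,-2})(-2i\xi)$ enumerated above; since $J=0$ and $k=-2$ these poles are simple. Taking $n$ even (the odd case being identical after the obvious relabelling) they split into the purely imaginary family $\{mi:|m|\geq n/2\}$ and the shifted family $\{m\nu_{\alpha\beta} i-\mu_\alpha:m\in\Z\setminus\{0\}\}$. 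After the change of variables $z=\ln z_1$, $w=\ln w_1$ the exponential $e^{i(z-\overline w)\xi}$ becomes $z_1^{i\xi}\overline w_1^{-i\xi}$, so a pole at $\xi_0$ contributes a term proportional to $z_1^{i\xi_0-n/2}\overline w_1^{-i\xi_0-n/2}$ once the prefactor in \eqref{EqnKernelTransfrom} is included. The size of each such contribution on an annulus is therefore governed by $\im\xi_0$, with larger (less negative) imaginary parts producing the dominant terms.

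Concretely, I would shift the contour downward from $\R$ to the horizontal line $\im\xi=-\nu_{\alpha\beta}-\varepsilon$, for a small $\varepsilon>0$ chosen so that no pole lies on it. By the residue theorem the kernel then equals $-2\pi i$ times the sum of the residues at the poles crossed, plus the integral along the shifted line, provided the two vertical connecting segments drop out as $\re\xi\to\pm\infty$. The poles crossed are the purely imaginary ones with $-\nu_{\alpha\beta}<m<0$, namely $m=-n/2-\ell$ for $0\leq\ell\leq[\nu_{\alpha\beta}-n/2]$, which give the terms $C_\ell z_1^{\ell}\overline w_1^{-\ell-n}z_n^{-2}\overline w_n^{-2}$, together with the single shifted pole $\xi_0=-\mu_\alpha-i\nu_{\alpha\beta}$ (the $m=-1$ member of the second family), whose imaginary part $-\nu_{\alpha\beta}$ places it just above the new contour and which produces the distinguished non-integer term $Cz_1^{\nu_{\alpha\beta}-n/2-i\mu_\alpha}\overline w_1^{-\nu_{\alpha\beta}-n/2+i\mu_\alpha}z_n^{-2}\overline w_n^{-2}$. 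The constants $C_\ell$ and $C$ are residues of $1/\F(W_{0,-2})(-2i\xi)$ at simple poles, hence nonzero; this is precisely where the simplicity guaranteed by the choice $J=0$, $k=-2$ (Remark \ref{kChoice}) is used.

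Verifying the vanishing of the vertical segments and estimating the shifted integral is the main analytic obstacle, and it is controlled by the growth of $1/\F(W_{0,-2})(-2i\xi)$. From \eqref{WeightTransformFormula} the factor $e^{i\xi\pi}-(-1)^{n}$, after the substitution $\xi\mapsto-2i\xi$ together with the accompanying $e^{-i\xi\pi/2}$, produces a $\sinh(\pi\xi)$ that forces $1/\F$ to decay like $e^{-\pi|\re\xi|}$ as $\re\xi\to+\infty$ and like $e^{-(\pi+4\alpha\ln\beta)|\re\xi|}$ as $\re\xi\to-\infty$. Combined with the elementary bound $|e^{i(z-\overline w)\xi}|=e^{-\re\xi\,\im(z-\overline w)-\im\xi\,\re(z-\overline w)}$ and the strip restriction $-\pi<\im(z-\overline w)<\pi+4\alpha\ln\beta$ recorded after Proposition \ref{JkKernel}, the integrand decays exponentially at both ends, so the vertical segments vanish and every interchange is justified. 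On the line $\im\xi=-\nu_{\alpha\beta}-\varepsilon$ the factor $|z_1|^{\nu_{\alpha\beta}+\varepsilon}$ emerges, which after including the $z_1^{-n/2}$ prefactor gives the claimed remainder order $z_1^{\nu_{\alpha\beta}-n/2+\varepsilon}\overline w_1^{-\nu_{\alpha\beta}-n/2-\varepsilon}$; differentiating \eqref{IntFormula} under the integral sign replaces $e^{i(z-\overline w)\xi}$ by $(i\xi)^m e^{i(z-\overline w)\xi}$, and passing from $\partial_z$ to $\partial_{z_1}=z_1^{-1}\partial_z$ accounts for the extra $z_1^{-m}$ in the stated bound for $\rem$. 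Finally, all of these estimates are uniform once $z,w$ remain in a compact subset of the strip interior, which corresponds precisely to $|z_n|$ and $|w_n|$ ranging over a closed subannulus of $1<|z_n|<\beta$, since membership of $(z_1,z',z_n)$ in $D$ pins $\im(\ln z_1)=\arg z_1$ to an interval centered at $2\alpha\ln|z_n|$.
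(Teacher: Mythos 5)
Your proposal is correct and follows essentially the same route as the paper: the paper's own (much terser) proof applies the residue theorem to \eqref{IntFormula} on the strip $-\nu_{\alpha\beta}-\varepsilon\le\im\xi\le 0$ and then substitutes into \eqref{EqnKernelTransfrom}, exactly as you do. Your identification of the crossed poles, the nonvanishing of the residues at simple poles, and the exponential decay estimates justifying the contour shift and the remainder bound are the details the paper leaves implicit, and they check out.
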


\begin{proof}  
We apply the residue theorem to the integral in  \eqref{IntFormula} along the
strip $-\nu_{\alpha\beta}-\varepsilon\le \im\xi\le 0$ to obtain
\begin{equation*}
K^{\alpha\beta}_{0,-2}(z,w) =
\sum_{\ell=0}^{[\nu_{\alpha\beta}-n/2]} C_\ell e^{(\ell+\frac{n}{2})( z-
\overline w)} + C e^{(\nu_{\alpha\beta}-i\mu_{\alpha})( z- \overline w)} +
\widetilde\rem(z,w)
\end{equation*}
for non-zero $C, C_\ell$, where $\widetilde\rem(z,w)$ and all of its derivatives
are $O\left( e^{(\nu_{\alpha\beta}+\varepsilon)( z-\overline w)}\right)$
on closed substrips of $S_{\alpha\beta}$. 

Plugging this into \eqref{EqnKernelTransfrom} we obtain \eqref{BergmanKernel}. 
\end{proof}

\begin{remark}\label{kChoice}
We have focused on the case $J=0,\, k=-2$   because this is the simplest choice
which avoids possible problems with double poles.  Analogous formulae hold for
other values of $k$ in the  absence of double poles.  When double poles do occur
they contribute factors of $\ln(z_1-\overline w_1)$.
\end{remark}

\begin{lemma}\label{SumEval}
$\displaystyle{ \sum_{s=0}^{j}\binom{j}{s} \frac{(-1)^{s}}{\xi+\alpha(j-2s)}
= \frac{(-2\alpha)^j j!} {(\xi+\alpha j)(\xi+\alpha(j-2))\cdots(\xi-\alpha
j)}}.$
\end{lemma}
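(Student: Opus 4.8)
The plan is to treat both sides as rational functions of $\xi$ and match their partial-fraction decompositions. Write $\xi_s=-\alpha(j-2s)$ for $s=0,1,\ldots,j$. Since $\alpha>0$, these $j+1$ numbers are distinct (they are equally spaced with gap $2\alpha$), so the right-hand side denominator $(\xi+\alpha j)(\xi+\alpha(j-2))\cdots(\xi-\alpha j)=\prod_{s=0}^{j}(\xi-\xi_s)$ has only simple zeros. Both sides are therefore rational functions whose only singularities are simple poles at the points $\xi_s$, and both are proper: the left-hand side is a finite sum of terms each decaying like $1/\xi$, while the right-hand side has numerator of degree $0$ and denominator of degree $j+1$. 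A proper rational function equals the sum over its poles of $(\text{residue})/(\xi-\text{pole})$, so it suffices to check that the two sides have equal residues at each $\xi_s$.

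The residue of the left-hand side at $\xi_s$ is immediate: only the $s$-th summand is singular there, and its residue is $(-1)^s\binom{j}{s}$. For the right-hand side I would compute
\begin{equation*}
\operatorname{Res}_{\xi=\xi_s}\frac{(-2\alpha)^j j!}{\prod_{m=0}^{j}(\xi-\xi_m)}
=\frac{(-2\alpha)^j j!}{\prod_{m\neq s}(\xi_s-\xi_m)}.
\end{equation*}
Using $\xi_s-\xi_m=-\alpha(j-2s)+\alpha(j-2m)=2\alpha(s-m)$, the product in the denominator factors as $(2\alpha)^j\prod_{m\neq s}(s-m)$.

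The one calculation that needs care is this last product: separating the factors with $m<s$ (which contribute $s!$) from those with $m>s$ (which contribute $(-1)^{j-s}(j-s)!$) gives $\prod_{m\neq s}(s-m)=(-1)^{j-s}\,s!\,(j-s)!$. Substituting back, the residue of the right-hand side at $\xi_s$ becomes
\begin{equation*}
\frac{(-2\alpha)^j j!}{(2\alpha)^j(-1)^{j-s}s!\,(j-s)!}
=(-1)^s\frac{j!}{s!\,(j-s)!}=(-1)^s\binom{j}{s},
\end{equation*}
which agrees with the residue of the left-hand side. Since both sides are proper rational functions with the same poles and the same residues, their difference is a polynomial tending to $0$ at infinity, hence is identically zero, proving the identity. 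The argument has essentially no obstacle beyond bookkeeping the sign and the factorials in the product $\prod_{m\neq s}(s-m)$; the degenerate case $\alpha=0$ never arises because the hypotheses guarantee $\alpha>0$, so all poles are genuinely simple.
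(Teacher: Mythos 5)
Your argument is correct, but it proceeds along a genuinely different route from the paper's. The paper proves the identity by induction on $j$: using Pascal's rule $\binom{j}{s}=\binom{j-1}{s-1}+\binom{j-1}{s}$ it splits the left-hand side into two sums, recognizes each as the case $j-1$ evaluated at the shifted arguments $\xi+\alpha$ and $\xi-\alpha$ respectively, and then combines the two resulting closed forms, the factor $\frac{1}{\xi+\alpha j}-\frac{1}{\xi-\alpha j}$ supplying exactly the missing $-2\alpha j$ needed to pass from $(-2\alpha)^{j-1}(j-1)!$ to $(-2\alpha)^{j}j!$. You instead observe that the left-hand side is already a partial-fraction decomposition with simple poles at $\xi_s=-\alpha(j-2s)$ and residues $(-1)^s\binom{j}{s}$, and verify that the right-hand side, a proper rational function with the same simple poles, has matching residues; your key computation $\prod_{m\neq s}(\xi_s-\xi_m)=(2\alpha)^j(-1)^{j-s}s!\,(j-s)!$ is correct, as is the conclusion that the difference, having no poles and vanishing at infinity, is identically zero. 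What your approach buys is a direct, non-inductive argument that also explains the identity conceptually: the right-hand side is nothing but the closed form whose partial-fraction expansion is the left-hand side, and the same residue computation evaluates any sum $\sum_s c_s/(\xi-\xi_s)$ over distinct simple poles. What the paper's induction buys is a self-contained algebraic manipulation that never invokes uniqueness of partial-fraction decompositions or the principle that a pole-free proper rational function vanishes, and whose structure is the one naturally suggested by the binomial coefficients.
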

\begin{proof}
The statement is true for $j=0$.

Working inductively and recalling that
$\binom{j}{s}=\binom{j-1}{s-1}+\binom{j-1}{s}$ we have
\begin{align*}
\sum_{s=0}^{j}\binom{j}{s}\frac{(-1)^{s}}{\xi+\alpha(j-2s)}
&=
\sum_{s=0}^{j-1}\binom{j-1}{s}\frac{(-1)^{s}}{\xi+\alpha(j-2s)}
+
\sum_{s=1}^{j}\binom{j-1}{s-1}\frac{(-1)^{s}}{\xi+\alpha(j-2s)}
\\
&=
\frac{(-2\alpha)^{j-1} (j-1)!}
{(\xi+\alpha j)(\xi+\alpha(j-2))\cdots(\xi+\alpha(-j+2))}\\
&\qquad -\frac{(-2\alpha)^{j-1} (j-1)!}
{(\xi+\alpha(j-2))(\xi+\alpha(j-4))\cdots(\xi-\alpha j)}\\
&=
\frac{(-2\alpha)^{j-1} (j-1)!}
{(\xi+\alpha(j-2))\cdots(\xi+\alpha(-j+2))}
\left(
\frac{1}{\xi+\alpha j}-\frac{1}{\xi-\alpha j}
\right)\\
&=\frac{(-2\alpha)^j j!}
{(\xi+\alpha j)(\xi+\alpha(j-2))\cdots(\xi-\alpha j)}.
\end{align*}
\end{proof}
\begin{proof}[Proof of Proposition \ref{WeightTransform}]
Write 
\[W_{Jk}(y)= C_{nJ} \Big( W_{Jk1} * W_{Jk2}\Big) (y/\alpha)\]
for $ -\pi /2<y<\pi/2+2\alpha \ln \beta$ where $f*g$ denotes the convolution of
$f$ and $g$ and  
\begin{align*}
W_{Jk1}(t)&=\cos^{|J|+n-2}(\alpha t)\chi_{\pi/2}(\alpha t), \\
 W_{Jk2}(t)&=e^{t(k+1)}\chi_{\ln \beta}(t-\ln\beta).
\end{align*}
To calculate the Fourier transform of $W_{Jk}$ we first calculate 
\[\cos^{j}(t)=\frac{1}{2^{j}}\sum_{s=0}^{j}\binom{j}{s}e^{i(2s-j)t}.\]
One can calculate that 
\[\F(\cos^{j}(t)\chi_{\pi/2}(t))(\xi)=\frac{1}{i\sqrt{2\pi}2^{j-1}}
\sum_{s=0}^{j}\binom{j}{s}\frac{\left(e^{\frac{i(\xi+j-2s)\pi}{2}}-e^{-\frac{i
(\xi+j-2s)\pi}{ 2 }}\right)}{2(\xi+j-2s)}.\]
 Lemma \ref{SumEval} implies that
\begin{align*}
\F(\cos^{j}(\alpha t)\chi_{\pi /2}(\alpha t))(\xi) 
&= \frac{1}{\alpha}\F(\cos^{j}( t)\chi_{\pi /2}(t))(\xi/\alpha)
\\ 
&=\frac{i^{j-1}\Big(e^{\frac{i\xi\pi}{2\alpha}}-(-1)^{j}e^{-\frac{i\xi\pi}{
2\alpha}}\Big)}{\sqrt{2\pi}2^{j}}\sum_{s=0}^{j}\binom{j}{s}\frac{(-1)^{s}}{
\xi+\alpha (j-2s)}\\
&=\frac{(-\alpha i)^j j!
\Big(e^{\frac{i\xi\pi}{2\alpha}}-(-1)^{j}e^{-\frac{i\xi\pi}{2\alpha}}\Big)}
{i\sqrt{2\pi}{(\xi+\alpha j)(\xi+\alpha(j-2))\cdots(\xi-\alpha j)}}.
\end{align*}

We also need to find the Fourier transform of $e^{kt}\chi_{a}(t-a)$:
 \[\F(e^{kt}\chi_{a}(t-a))(\xi)= \frac{1}{\sqrt{2\pi}}\frac{e^{2a(k-i\xi)}-1}{
k-i\xi}.\]
Using $\F(f*g)=\sqrt{2\pi}\F(f)\F(g)$ we find that the Fourier transform of
$W_{Jk}$ is given by \eqref{WeightTransformFormula}.
\end{proof}

\section{Proof of Theorem \ref{Theorem}}\label{ProveThm}

The proof of Theorem \ref{Theorem} follows immediately from
Lemmas \ref{SupposeContrary}  and \ref{Bad-f} below.

\begin{lemma}
If $P$ is continuous on $W^{p,s}(\D_{\alpha\beta})$ then 
\begin{equation}\label{Estimate}
\left\| |r_{\lambda}|^{t} \left(\frac{\partial}{\partial z_1}\right)^m
P_{\lambda}f\right\|_{L^{p}(D_{\lambda})} \leq C  \left\|
f\right\|_{W^{p,s}(D_{\lambda})}
\end{equation}
where $m$ is a nonnegative integer, $0\leq t<1$ such that $m=s+t$ and  the
constant $C$ is independent of $\lambda$ and $f.$
\end{lemma}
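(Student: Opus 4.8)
The plan is to exploit the scaling equivariance of the Bergman projection under $\tau_\lambda$ together with the assumed continuity of $P$ on $W^{p,s}(\D_{\alpha\beta})$, and then to track how the various norms transform under the dilation. First I would record the key covariance property: since $\tau_\lambda$ is a biholomorphic (indeed affine) automorphism of $\C^n$ carrying $\D_{\alpha\beta}$ onto $D_\lambda$, the Bergman projections intertwine, i.e. $P_\lambda (f\circ\tau_\lambda^{-1})=(Pf)\circ\tau_\lambda^{-1}$ up to the holomorphic Jacobian factor of $\tau_\lambda$. Because $\tau_\lambda$ is linear with constant Jacobian $2\lambda^{2}\cdot\lambda^{n-2}=2\lambda^{n}$, the weight factors are just powers of $\lambda$ and can be pulled out of every norm. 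Thus a bound for $P$ on $\D_{\alpha\beta}$ transfers to a bound for $P_\lambda$ on $D_\lambda$ with a constant that is \emph{a priori} $\lambda$-dependent, and the main arithmetic is to check that the powers of $\lambda$ cancel so that the constant is in fact uniform in $\lambda$.

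Next I would carry out the change of variables explicitly. For a test function $f$ on $D_\lambda$ set $g=f\circ\tau_\lambda$, a function on $\D_{\alpha\beta}$; applying the hypothesis $\|Pg\|_{W^{p,s}(\D_{\alpha\beta})}\le C\|g\|_{W^{p,s}(\D_{\alpha\beta})}$ and then pushing forward by $\tau_\lambda$ gives an inequality between weighted norms on $D_\lambda$. On the right-hand side I would bound the $W^{p,s}$ norm of $g$ by the $W^{p,s}$ norm of $f$ on $D_\lambda$, again tracking the $\lambda$-powers coming from the $z_1$- and $z'$-rescalings in the derivatives of order up to $s$; this is where the specific form of $\tau_\lambda=(2\lambda^{2}z_1,\lambda z',z_n)$ matters, since derivatives in different variables scale by different powers of $\lambda$. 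On the left-hand side I would interpolate between integer Sobolev orders to handle the fractional part: writing $s=m-t$ with $m$ a nonnegative integer and $0\le t<1$, the quantity $\||r_\lambda|^{t}(\partial/\partial z_1)^m P_\lambda f\|_{L^p(D_\lambda)}$ is dominated by the $W^{p,s}$ norm of $P_\lambda f$ on $D_\lambda$ because multiplication by the bounded factor $|r_\lambda|^{t}$ trades the fractional $t$ derivatives for a boundary-distance weight. Here $|r_\lambda|$ plays the role of a defining function comparable to boundary distance, so that $|r_\lambda|^{t}\partial_{z_1}^m$ is controlled by the $s$-th order Sobolev norm via a Hardy-type or trace inequality.

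The hard part will be verifying that the dilation constants cancel so that $C$ is genuinely independent of $\lambda$, rather than merely showing a $\lambda$-dependent estimate. This requires matching the Jacobian weight $\lambda^{n}$ in the $L^p$ integrals against the derivative-scaling weights on both sides, using that $r_\lambda=\lambda^{2}r\circ\tau_\lambda^{-1}$ so that $|r_\lambda|^{t}$ contributes exactly the power $\lambda^{2t}$ needed to absorb the fractional shift; the bookkeeping must be done carefully because the weight $|r_\lambda|^{t}$ and the derivative $(\partial/\partial z_1)^m$ scale in opposite directions. I would also need to confirm that the weighted estimate relating $|r_\lambda|^{t}\partial_{z_1}^m u$ to $\|u\|_{W^{p,s}}$ holds with a constant depending only on the geometry of $D_\lambda$ in a scale-invariant way; since the $D_\lambda$ converge to the fixed model domain $D$, one expects these geometric constants to stay bounded, but making this uniform is the crux. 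Once the powers of $\lambda$ are seen to cancel and the auxiliary weighted inequality is established uniformly, combining the two chains of inequalities yields precisely \eqref{Estimate} with a $\lambda$-independent $C$.
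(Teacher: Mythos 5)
Your overall strategy (dilation covariance plus the hypothesis plus a weighted Sobolev estimate for holomorphic functions) is the same as the paper's, but the order in which you apply the ingredients creates a genuine gap. You propose to dominate $\left\||r_\lambda|^{t}(\partial/\partial z_1)^m P_\lambda f\right\|_{L^p(D_\lambda)}$ by the full norm $\|P_\lambda f\|_{W^{p,s}(D_\lambda)}$ (a Ligocka-type estimate applied on $D_\lambda$, with a constant you hope is uniform in $\lambda$), and then to control $\|P_\lambda f\|_{W^{p,s}(D_\lambda)}$ by conjugating back to $\D_{\alpha\beta}$ and using the hypothesis. That second step cannot give a $\lambda$-independent constant: because $\tau_\lambda$ is anisotropic, the $W^{p,s}$ norm does not transform by a single power of $\lambda$. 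Pulling $PT_\lambda f$ back to $D_\lambda$ costs $\lambda^{2n/p}$ (worst case: the underived and $z_n$-derivative terms, which pick up no compensating negative power), while pushing $f$ forward to $\D_{\alpha\beta}$ costs $\lambda^{2s-2n/p}$ (worst case: all derivatives falling on $z_1$). The round trip therefore produces $\lambda^{2s}$, which blows up as $\lambda\to\infty$; indeed there is no reason to expect $P_\lambda$ to be uniformly bounded on $W^{p,s}(D_\lambda)$ at all. The cancellation you correctly identify — $\lambda^{2t}$ from the weight against $(2\lambda^{2})^{-m}$ from the derivatives and the Jacobian powers — is available only if you never pass through the full Sobolev norm of $P_\lambda f$ on $D_\lambda$ as an intermediate quantity.

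The paper's proof avoids both of your difficulties by reversing the order: the specific weighted quantity is transferred first, via the exact scaling identity
\[
\left\| |r_{\lambda}|^{t} \left(\tfrac{\partial}{\partial z_1}\right)^{m} P_{\lambda}f\right\|_{L^{p}(D_{\lambda})}
= 2^{2/p-m}\lambda^{2t+2n/p-2m}\left\| |r|^{t} \left(\tfrac{\partial}{\partial z_{1}}\right)^{m} PT_{\lambda}f\right\|_{L^{p}(\D_{\alpha\beta})},
\]
and only then is the weighted estimate \eqref{HoloSob} applied — on the \emph{fixed} domain $\D_{\alpha\beta}$, where its constant is a single number. Thus what you call the crux (uniformity of the weighted estimate over the degenerating family $D_\lambda$, which you justify only heuristically by $D_\lambda\to D$, the limit $D$ being unbounded) never arises. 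After that, continuity of $P$ on $W^{p,s}(\D_{\alpha\beta})$ and the scaling bound $\|T_\lambda f\|_{W^{p,s}(\D_{\alpha\beta})}\le 2^{s-2/p}\lambda^{2s-2n/p}\|f\|_{W^{p,s}(D_\lambda)}$ finish the argument, the exponents summing to $2t-2m+2s=0$ since $s=m-t$. One further point you should make explicit in any reworking: the inequality $\||r|^{t}\partial_{z_1}^m u\|_{L^p}\le C\|u\|_{W^{p,s}}$ with $s=m-t<m$ is \emph{false} for general $u\in W^{p,s}$ (it would control $m$ derivatives by fewer); it is a theorem of Ligocka about holomorphic functions, so the holomorphy of the projected function is essential, not merely a Hardy or trace inequality.
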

\begin{proof} 
Assume that $P$ maps $W^{p,s}(\D_{\alpha\beta})$ onto itself continuously 
and let $T_{\lambda}f=f\circ \tau_{\lambda}.$ Then one can check that 
\[\left\| \left(\frac{\partial}{\partial z}\right)^{P}
\left(\frac{\partial}{\partial \overline  z}\right)^{Q}T_{\lambda}f
\right\|_{L^{p}(\D_{\alpha\beta})}=2^{p_{1}+q_{1}-2/p}\lambda^{2p_{1}
+2q_{1}+|P'|+|Q'|-2n/p} \left\| \left(\frac{\partial}{\partial z}\right)^{P}
\left(\frac{\partial}{\partial \overline  z}\right)^{Q}f
\right\|_{L^{p}(D_{\lambda})} \]
where
$P=(p_{1},\ldots,p_{n}),Q=(q_{1},\ldots,q_{n}),P'=(p_{2},\ldots,p_{n-1}),Q'=(q_{
2},\ldots,q_{n-1}),|P'|=p_{1}+\cdots+p_{n-1},$ and $|Q'|=q_{1}+\cdots+q_{n-1}.$
Therefore we have 
\[\| T_{\lambda}f \|_{W^{p,k}(\D_{\alpha\beta})}\leq
2^{k-2/p}\lambda^{2k-2n/p}\| f \|_{W^{p,k}(D_{\lambda})} .\]
By interpolation we also have 
$\| T_{\lambda}f \|_{W^{p,s}(\D_{\alpha\beta})}\leq
2^{s-2/p}\lambda^{2s-2n/p}\| f \|_{W^{p,s}(D_{\lambda})}$ for all $s>0$.

Let $s=m-t$ where $m$ is a nonnegative integer and $0\leq t<1$. We have 
\begin{equation}\label{HoloSob}
\left\| |r|^{t} \left(\frac{\partial}{\partial z_{1}}\right)^{m}
f\right\|_{L^{p}(\D_{\alpha\beta})} \le  C_1
\left\|f\right\|_{W^{p,s}(\D_{\alpha\beta})}
\end{equation}
for $f$ holomorphic on $\D_{\alpha\beta}$ (see, for example,
\cite{LIgocka87}).

Let $P_{\lambda}$ be the Bergman projection for $D_{\lambda}$. Then
$P_{\lambda}=T_{\lambda}^{-1}PT_{\lambda}$ and 
\begin{eqnarray*}
\left\| |r_{\lambda}|^{t} \left(\frac{\partial}{\partial z_1}\right)^m
P_{\lambda}f\right\|_{L^{p}(D_{\lambda})}
 &=& \left\| |r_{\lambda}|^{t} \left(\frac{\partial}{\partial z_1}\right)^{m}
T_{\lambda}^{-1}PT_{\lambda}f\right\|_{L^{p}(D_{\lambda})}\\
&=&2^{2/p-m}\lambda^{2t+2n/p-2m}\left\| |r|^{t} \left(\frac{\partial}{\partial
z_{1}}\right)^{m} PT_{\lambda}f\right\|_{L^{p}(\D_{\alpha\beta})}\\
&\leq& C_2 \lambda^{2t+2n/p-2m}\left\|
PT_{\lambda}f\right\|_{W^{p,s}(\D_{\alpha\beta})}\\
&\leq& C_3  \lambda^{2n/p-2s}\left\|
T_{\lambda}f\right\|_{W^{p,s}(\D_{\alpha\beta})}\\
&\leq& C_4 \left\| f\right\|_{W^{p,s}(D_{\lambda})}
\end{eqnarray*}
where the constants are independent of $\lambda$.  
\end{proof}
\begin{lemma} \label{SupposeContrary}
If the estimate \eqref{Estimate} holds on $D_{\lambda}$ then 
$$\left\| |r_{\infty} |^{t} \left(\frac{\partial}{\partial z_1}\right)^m
P_{\infty}f\right\|_{L^{p}(D)} \leq C  \left\| f\right\|_{W^{p,s}(D)} $$
where $P_{\infty}$ is the Bergman projection on $D$ and the constant $C$ is
independent of $f.$  
\end{lemma}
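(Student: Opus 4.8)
The plan is to pass to the limit $\lambda\to\infty$ in \eqref{Estimate}, exploiting the convergence $D_\lambda\to D$ established in Section \ref{Model}. Fix $f\in W^{p,s}(D)\cap L^2(D)$ and regard its restriction to each $D_\lambda$ as an element of $W^{p,s}(D_\lambda)$; since $D_\lambda\subset D$ we have $\|f\|_{W^{p,s}(D_\lambda)}\le\|f\|_{W^{p,s}(D)}$, so \eqref{Estimate} gives the uniform bound
\[
\Bigl\||r_\lambda|^t (\tfrac{\partial}{\partial z_1})^m P_\lambda f\Bigr\|_{L^p(D_\lambda)}\le C\,\|f\|_{W^{p,s}(D)}
\]
with $C$ independent of $\lambda$. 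Separately, over the annulus $1<|z_n|<\beta$ we have $r_\lambda\searrow r_\infty$, and for any compact $K\subset D$ (where $r_\infty<0$) Dini's theorem yields $r_\lambda<0$ on $K$, hence $K\subset D_\lambda$, for all large $\lambda$.

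The crucial step is to show that $P_\lambda f\to P_\infty f$, together with its $z_1$-derivatives, locally uniformly on $D$ --- a Ramadanov-type statement. First, $\|P_\lambda f\|_{L^2(D_\lambda)}\le\|f\|_{L^2(D_\lambda)}\le\|f\|_{L^2(D)}$ because $P_\lambda$ is an orthogonal projection; since $P_\lambda f$ is holomorphic, the sub-mean-value inequality converts this into a uniform sup bound on each compact subset of $D$, so $\{P_\lambda f\}$ is a normal family there. I would then check that any locally uniform limit $g$ of a subsequence lies in $A^2(D)$ (Fatou) and in fact equals $P_\infty f$: starting from the orthogonality relations $\langle f-P_\lambda f,h\rangle_{D_\lambda}=0$ for $h\in A^2(D)$ restricted to $D_\lambda$, let $\lambda\to\infty$ using dominated convergence for $\langle f,h\rangle_{D_\lambda}\to\langle f,h\rangle_D$ and weak $L^2$-convergence $P_\lambda f\rightharpoonup g$ for the other term, to obtain $\langle f-g,h\rangle_D=0$ for all $h\in A^2(D)$. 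Uniqueness of the Bergman projection forces $g=P_\infty f$ and hence promotes the subsequential convergence to convergence of the whole family; Weierstrass's theorem then gives locally uniform convergence of $(\tfrac{\partial}{\partial z_1})^m P_\lambda f$ as well.

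With both convergences established, the estimate passes to the limit by Fatou. For any compact $K\subset D$, choosing $\lambda$ large enough that $K\subset D_\lambda$,
\[
\int_K \Bigl(|r_\infty|^t \bigl|(\tfrac{\partial}{\partial z_1})^m P_\infty f\bigr|\Bigr)^p dV
= \lim_{\lambda\to\infty}\int_K \Bigl(|r_\lambda|^t \bigl|(\tfrac{\partial}{\partial z_1})^m P_\lambda f\bigr|\Bigr)^p dV
\le C^p\,\|f\|_{W^{p,s}(D)}^p ,
\]
where the equality uses the locally uniform convergence of $r_\lambda$ and of the $z_1$-derivatives of $P_\lambda f$ on $K$, and the inequality follows from $\int_K\le\int_{D_\lambda}$ together with the displayed uniform bound. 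Letting $K$ exhaust $D$ and applying monotone convergence yields the desired estimate $\||r_\infty|^t(\tfrac{\partial}{\partial z_1})^m P_\infty f\|_{L^p(D)}\le C\|f\|_{W^{p,s}(D)}$.

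I expect the middle step to be the main obstacle: the domains $D_\lambda$ vary with $\lambda$ and the functions involved need not be compactly supported, so identifying the normal-family/weak limit of $P_\lambda f$ with the Bergman projection $P_\infty f$ on $D$ requires care. Once this convergence is in place the weight convergence $|r_\lambda|^t\to|r_\infty|^t$ and the Fatou passage are routine. It is worth noting that the exhaustion $D_\lambda\to D$ is only monotone over the annulus $1<|z_n|<\beta$, but by \eqref{LimitDomain} this annulus is precisely the region occupied by $D$, so no difficulty arises from the behavior of $r_\lambda$ outside it.
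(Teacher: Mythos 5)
Your overall strategy---uniform estimates from \eqref{Estimate}, a normal-families limit of $P_\lambda f$, identification of that limit with $P_\infty f$, then a Fatou passage---is in spirit exactly the route the paper takes (it invokes Lemma 1 of \cite{Barrett92}, which is this kind of limiting argument). But your execution rests on the claim that $D_\lambda\subset D$, and that claim is false. The monotone convergence $r_\lambda\searrow r_\infty$ holds only over the annulus $1\le|z_n|\le\beta$; outside that annulus one has merely pointwise divergence $r_\lambda\to\infty$, and for every finite $\lambda$ the domain $D_\lambda=\tau_\lambda(\D_{\alpha\beta})$ still contains points with $|z_n|<1$ and with $|z_n|>\beta$. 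Concretely, for $t<1$ close to $1$ the point $(2\lambda^2e^{2i\alpha\ln t},0,t)$ lies in $D_\lambda$, because $r(e^{2i\alpha\ln t},0,t)=-1+Me^{-1/(1-t^2)}<0$ once $1-t^2<1/\ln M$; no such point lies in $D$, since $D\subset\{1<|z_n|<\beta\}$ by \eqref{LimitDomain}. Thus neither $D_\lambda\subset D$ nor $D\subset D_\lambda$ holds: the domains $D_\lambda$ carry collars outside the annulus which disappear only in the limit, and the convergence $D_\lambda\to D$ is genuinely two-sided.

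This false inclusion enters your argument twice. The first use---restricting $f\in W^{p,s}(D)$ to $D_\lambda$ with $\|f\|_{W^{p,s}(D_\lambda)}\le\|f\|_{W^{p,s}(D)}$---is repairable: take $f\in C_0^\infty(D)$ (all that Lemma \ref{Bad-f} needs), extend by zero, and note that $\mathrm{supp}\,f\subset D_\lambda$ for large $\lambda$. The second use is fatal: to identify the normal-families limit $g$ with $P_\infty f$ you test the relation $\langle f-P_\lambda f,h\rangle_{D_\lambda}=0$ against ``$h\in A^2(D)$ restricted to $D_\lambda$,'' but such an $h$ is not even defined on $D_\lambda\setminus D$, let alone an element of $A^2(D_\lambda)$, so the orthogonality relation cannot be invoked; for the same reason the asserted weak convergence $P_\lambda f\rightharpoonup g$ has no meaning when paired with $h\in A^2(D)$ over the varying domains $D_\lambda$. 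This identification step is precisely the one you flagged as the main obstacle, and it is the actual mathematical content of the cited Lemma 1 of \cite{Barrett92}: because the convergence of domains is non-monotone, no off-the-shelf Ramadanov-type theorem applies (for non-monotone exhaustions, stability of Bergman kernels and projections can fail in general), and one must exploit the specific structure of these domains---for instance the common torus invariance of $D_\lambda$ and $D$ and the resulting reduction to explicit one-variable weighted kernels---to control what happens on the collars and pin down the limit. Your closing remark that ``no difficulty arises from the behavior of $r_\lambda$ outside'' the annulus has matters exactly backwards: the entire difficulty of the lemma lives there.
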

The above lemma can be proved like Lemma 1 in \cite{Barrett92}.
\begin{lemma} \label{Bad-f}
Let $s\geq \nu_{\alpha\beta}+n\left(\frac{1}{p}-\frac{1}{2}\right)$ where
$\nu_{\alpha\beta}=\frac{\pi}{ 2\alpha\ln\beta}$ and $s=m-t$ as above. Then
there exists $f\in C^{\infty}_{0}(D)$ such that $|r_{\infty} |^{t}
\left(\frac{\partial}{\partial z_1}\right)^m P_{\infty}f$ is not in $L^p(D)$.
\end{lemma}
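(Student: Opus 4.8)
The plan is to reduce to the single Fourier mode $(J,k)=(0,-2)$ and read off the irregularity from the non-integer power $z_1^{\nu_{\alpha\beta}-n/2-i\mu_{\alpha}}$ appearing in \eqref{BergmanKernel}. First I would exploit the rotational symmetry of $D$: each $\rho_{St}$ is an automorphism of $D$ with constant unimodular Jacobian, so $P_{\infty}$ commutes with the torus action and preserves every mode $A^2_{Jk}(D)$; hence if $f$ satisfies $f\circ\rho_{St}=e^{-2it}f$ then $P_{\infty}f=P_{0,-2}f=\int_D K_{0,-2}(\cdot,w)f(w)\,dV(w)$. I would produce such an $f$ explicitly: set $u(w)=w_1^{-\nu_{\alpha\beta}-n/2-i\mu_{\alpha}}w_n^{-2}$, a well-defined smooth function on $D$ (the branch of $\ln w_1$ fixed by $|\Arg w_1-\alpha s|<\pi/2$, where $s=\ln|w_n|^2$), and let $f=\psi u$ with $\psi\ge 0$ a nontrivial smooth function of $(|w_1|,|w'|,|w_n|)$ supported in a compact subset of $D$. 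Then $f\in C^{\infty}_0(D)$ lies in the mode $(0,-2)$, and substituting \eqref{BergmanKernel} into $P_{0,-2}f$ and integrating term by term, the coefficient of the singular monomial $z_1^{\nu_{\alpha\beta}-n/2-i\mu_{\alpha}}z_n^{-2}$ equals $Ca$ with $a=\int_D\overline u\,f\,dV=\int_D\psi|u|^2\,dV>0$; thus the singular term genuinely appears.

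Next I would apply $(\partial/\partial z_1)^m$. Every polynomial term $z_1^{\ell}z_n^{-2}$ with $0\le\ell\le[\nu_{\alpha\beta}-n/2]$ is annihilated, because $m\ge s\ge\nu_{\alpha\beta}+n(\frac1p-\frac12)>\nu_{\alpha\beta}-\frac n2\ge[\nu_{\alpha\beta}-\frac n2]\ge\ell$ forces $\ell<m$. The singular term produces $Ca\,c_m\,z_1^{\nu_{\alpha\beta}-n/2-i\mu_{\alpha}-m}z_n^{-2}$ with $c_m=\prod_{j=0}^{m-1}(\nu_{\alpha\beta}-n/2-i\mu_{\alpha}-j)\ne 0$, each factor having nonzero imaginary part $-\mu_{\alpha}$. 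Finally, differentiating the remainder and using the bound on $(\partial/\partial z_1)^m\rem$ from Proposition \ref{Kernel} together with the compact support of $f$ gives $(\partial/\partial z_1)^m\int_D\rem\,f\,dV=O(z_1^{\nu_{\alpha\beta}-n/2+\varepsilon-m})$ uniformly on closed subannuli of $1<|z_n|<\beta$. Hence $(\partial/\partial z_1)^m P_{\infty}f$ is the singular monomial plus a term whose modulus is $o(r_1^{\nu_{\alpha\beta}-n/2-m})$ as $r_1=|z_1|\to 0$.

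The crux is the weighted $L^p$ estimate near $z_1=0$. Since $|z_1^{-i\mu_{\alpha}}|=e^{\mu_{\alpha}\theta_1}$ and $|z_n|$ are bounded above and below, the singular monomial has modulus comparable to $r_1^{\nu_{\alpha\beta}-n/2-m}$, so on $\{r_1<\delta\}$ with $z_n$ in a fixed subannulus the singular term dominates the remainder and $|r_{\infty}|^{t}(\partial/\partial z_1)^m P_{\infty}f\gtrsim|r_{\infty}|^{t}r_1^{\nu_{\alpha\beta}-n/2-m}$. In the coordinates $z_1=r_1e^{i\theta_1}$, $s=\ln|z_n|^2$ one has $r_{\infty}=|z'|^2-r_1\cos(\theta_1-\alpha s)$; integrating first in $z'$, with $c=r_1\cos(\theta_1-\alpha s)$, gives $\int_{|z'|<\sqrt c}(c-|z'|^2)^{tp}\,dV(z')=\text{const}\cdot c^{\,tp+n-2}$. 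Collecting powers of $r_1$ (including the factor $r_1$ from $dV(z_1)$) leaves the radial integral $\int_0^{\delta} r_1^{\,p(\nu_{\alpha\beta}-n/2-s)+n-1}\,dr_1$, the angular and $z_n$-integrals contributing positive finite constants. This radial integral diverges exactly when $p(\nu_{\alpha\beta}-n/2-s)+n-1\le-1$, i.e. when $s\ge\nu_{\alpha\beta}+n(\frac1p-\frac12)$, which is the hypothesis; therefore $|r_{\infty}|^{t}(\partial/\partial z_1)^m P_{\infty}f\notin L^p(D)$.

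The main obstacle is the bookkeeping that isolates the non-integer power cleanly rather than any one estimate: one must check that the singular term is actually generated ($a\ne 0$), that every true integer power is killed by $(\partial/\partial z_1)^m$, and that the remainder—which itself blows up at $z_1=0$ once $s$ is large—stays strictly below the singular monomial so that no cancellation can occur. Once these are in place, matching the power of $r_1$ to the threshold is the short Beta-integral computation above.
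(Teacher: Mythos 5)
Your proposal is correct, and its analytic core coincides with the paper's: both proofs run through Proposition \ref{Kernel}, annihilate the integer powers $z_1^{\ell}$ because $m>\nu_{\alpha\beta}-n/2\geq[\nu_{\alpha\beta}-n/2]$, dominate the remainder using the $O\left(z_1^{\nu_{\alpha\beta}-n/2+\varepsilon-m}\right)$ bound, and end with exactly the same weighted volume computation $\int_0^\delta r_1^{p(\nu_{\alpha\beta}-n/2-s)+n-1}\,dr_1$, divergent precisely when $s\geq\nu_{\alpha\beta}+n\left(\frac1p-\frac12\right)$. Where you genuinely differ is in how the test function is produced and how the problem is reduced to the mode $(0,-2)$. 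The paper takes $f$ to be a radial bump of unit mass centered at a fixed $w\in D$, so the mean value property gives $P_{\infty}f=K(\cdot,w)$ on the nose and hence $P_{0,-2}P_{\infty}f=K_{0,-2}(\cdot,w)$, whose singular coefficient $C\overline w_1^{-\nu_{\alpha\beta}-n/2+i\mu_{\alpha}}\overline w_n^{-2}$ is nonzero for free; the cost is the reduction step asserting that it suffices to make this single mode bad, which rests on the mapping property of $P_{Jk}$ on $W^{p,\delta}(D)\cap A^p(D)$ (stated without proof) together with \eqref{HoloSob}. You instead build $f=\psi u$ inside the $(0,-2)$ mode itself, with $u$ chosen as (a branch of) $w_1^{-\nu_{\alpha\beta}-n/2-i\mu_{\alpha}}w_n^{-2}$ so that pairing against the singular term of \eqref{BergmanKernel} yields the manifestly positive coefficient $a=\int_D\psi|u|^2\,dV$; then torus-equivariance of $P_{\infty}$ gives $P_{\infty}f=\int_D K_{0,-2}(\cdot,w)f(w)\,dV(w)$ directly, so no mode projection and no auxiliary boundedness claim is needed. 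Your route is thus more self-contained (it bypasses the least transparent functional-analytic step of the paper's proof), at the price of an explicit nondegeneracy check --- the verification that $a\neq0$ and that $c_m=\prod_{j=0}^{m-1}(\nu_{\alpha\beta}-n/2-i\mu_{\alpha}-j)\neq 0$ --- which the paper's choice of $f$ renders automatic and which, as you note, is exactly where cancellation would otherwise have to be ruled out.
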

\begin{proof} Since $P_{Jk}$ maps $W^{p,\delta}(D)\cap A^p(D)$ onto 
$W^{p,\delta}(D)\cap A^p_{Jk}(D)$ for all $\delta\geq 0$ it is sufficient to
prove that there exists $f\in C^{\infty}_0(D)$ such that
$P_{Jk}P_{\infty}f\not\in W^{p,s}(D).$ Fix $w\in D, J=0,$ and $k=-2$. Let $f$ be
a nonnegative smooth function with compact support in $D$ such that it depends
on $|z-w|$ and $\int_{D}f=1.$ Then $K_{0,-2}(\cdot,w)=P_{0,-2}P_{\infty}f.$ We
can write $s=m-t$ where $m$  is a nonnegative integer and $0\leq t<1.$ In view
of \eqref{HoloSob} above (adapted to $D$) it suffices to show that
$|r_{\infty}(z)|^{t}\frac{\partial^m}{\partial z_1^m}K_{0,-2}(z,w)\not\in
L^p(D)$ for fixed $w.$  Proposition \ref{Kernel} implies that     
\[\frac{\partial^m}{\partial z_1^m}K_{0,-2}(z,w)= 
C z_{1}^{\nu_{\alpha\beta}-n/2-i\mu_{\alpha} -m}
+O\left(z_1^{\nu_{\alpha\beta}-n/2+\varepsilon-m}
\right).\]
Let 
\begin{multline*}
D'= \Big\{(z_{1},z',z_{n})\in \C^{n}:\re\left( z_{1}e^{-2\alpha
i\ln|z_{n}|}\right) >|z'|^{2},1+\delta<|z_{n}|<\beta-\delta,\\
|z_1|<\delta, \Big|\theta_1-2\alpha \ln|z_n|\Big|<\frac{\pi}{4}\Big\}
\end{multline*}
for suitably small $\delta>0$.
Then $|r_{\infty}|$ is comparable to $|z_1|$ on $D'$ and 
\begin{align*} 
\int\limits_{D}|r_{\infty}(z)|^{pt}\left|\frac{\partial^m}{\partial
z_1^m}K_{0,-2}(z,w)\right|^{p}dV(z)  
&\ge  \int\limits_{D'}|r_{\infty}(z)|^{pt}\left|\frac{\partial^m}{\partial
z_1^m}K_{0,-2}(z,w)\right|^{p}dV(z)\\
 &\ge c \int_{0}^{\delta} r_1^{p\nu_{\alpha\beta}+pt-pm+n-1-pn/2} dr_1
\end{align*}
where $c$ is a positive constant. The last integral above is divergent if $s\geq
\nu_{\alpha\beta}+n\left(\frac{1}{p}-\frac{1}{2}\right).$ Therefore   
\[|r_{\infty}(z)|^t\frac{\partial^m}{\partial z_1^m}P_{0,-2}P_\infty f
=|r_{\infty}(z)|^t\frac{\partial^m}{\partial z_1^m}K_{0,-2}(z,w)\not\in
L^p(D)\] 
for $s\geq \nu_{\alpha\beta}+n\left(\frac{1}{p}-\frac{1}{2}\right).$
\end{proof}

\section{Acknowledgment} 
We would like to  thank the referee for pointing out a mistake  in an
earlier version of this manuscript.



\begin{thebibliography}{KP08b}

\bibitem[Bar92]{Barrett92}
David~E. Barrett, \emph{Behavior of the {B}ergman projection on the
  {D}iederich-{F}orn\ae ss worm}, Acta Math. \textbf{168} (1992), no.~1-2,
  1--10.

\bibitem[BS90]{BoasStraube90}
Harold~P. Boas and Emil~J. Straube, \emph{Equivalence of regularity for the
  {B}ergman projection and the {$\overline \partial$}-{N}eumann operator},
  Manuscripta Math. \textbf{67} (1990), no.~1, 25--33.

\bibitem[BS92]{BoasStraube92}
\bysame, \emph{The {B}ergman projection on {H}artogs domains in {${\bf C}^2$}},
  Trans. Amer. Math. Soc. \textbf{331} (1992), no.~2, 529--540.

\bibitem[BS99]{BoasStraube99}
\bysame, \emph{Global regularity of the {$\overline\partial$}-{N}eumann
  problem: a survey of the {$L^2$}-{S}obolev theory}, Several complex variables
  ({B}erkeley, {CA}, 1995--1996), Math. Sci. Res. Inst. Publ., vol.~37,
  Cambridge Univ. Press, Cambridge, 1999, pp.~79--111.

\bibitem[Chr96]{Christ96}
Michael Christ, \emph{Global {$C\sp \infty$} irregularity of the
  {$\overline\partial$}-{N}eumann problem for worm domains}, J. Amer. Math.
  Soc. \textbf{9} (1996), no.~4, 1171--1185.

\bibitem[CS01]{ShawBook}
So-Chin Chen and Mei-Chi Shaw, \emph{Partial differential equations in several
  complex variables}, AMS/IP Studies in Advanced Mathematics, vol.~19, American
  Mathematical Society, Providence, RI, 2001.

\bibitem[DF77]{DiederichFornaess77}
Klas Diederich and John~Erik Fornaess, \emph{Pseudoconvex domains: an example
  with nontrivial {N}ebenh\"ulle}, Math. Ann. \textbf{225} (1977), no.~3,
  275--292.

\bibitem[Kis91]{Kiselman91}
Christer~O. Kiselman, \emph{A study of the {B}ergman projection in certain
  {H}artogs domains}, Several complex variables and complex geometry, {P}art 3
  ({S}anta {C}ruz, {CA}, 1989), Proc. Sympos. Pure Math., vol.~52, Amer. Math.
  Soc., Providence, RI, 1991, pp.~219--231.

\bibitem[KN65]{KohnNirenberg65}
J.~J. Kohn and L.~Nirenberg, \emph{Non-coercive boundary value problems}, Comm.
  Pure Appl. Math. \textbf{18} (1965), 443--492.

\bibitem[KP08a]{KrantzPeloso08b}
Steven~G. Krantz and Marco~M. Peloso, \emph{Analysis and geometry on worm
  domains}, J. Geom. Anal. \textbf{18} (2008), no.~2, 478--510.

\bibitem[KP08b]{KrantzPeloso08a}
\bysame, \emph{The {B}ergman kernel and projection on non-smooth worm domains},
  Houston J. Math. \textbf{34} (2008), no.~3, 873--950.

\bibitem[Lig87]{LIgocka87}
Ewa Ligocka, \emph{Estimates in {S}obolev norms {$\Vert \cdot\Vert \sp s\sb p$}
  for harmonic and holomorphic functions and interpolation between {S}obolev
  and {H}\"older spaces of harmonic functions}, Studia Math. \textbf{86}
  (1987), no.~3, 255--271.

\bibitem[{\c{S}}S06]{SahutogluStraube06}
S{\"o}nmez {\c{S}}ahuto{\u{g}}lu and Emil~J. Straube, \emph{Analytic discs,
  plurisubharmonic hulls, and non-compactness of the
  {$\overline\partial$}-{N}eumann operator}, Math. Ann. \textbf{334} (2006),
  no.~4, 809--820.

\bibitem[Str10]{StraubeBook}
Emil~J. Straube, \emph{Lectures on the $\mathcal{L}^{2}$-{S}obolev theory of
  the $\overline\partial$-{N}eumann problem}, ESI Lectures in Mathematics and
  Physics, vol.~7, European Mathematical Society (EMS), Z\"urich, 2010.

\end{thebibliography}
\end{document}